 \documentclass[authoryear,preprint,12pt]{elsarticle}

\journal{Journal of Mathematical Analysis and Applications}
\usepackage{subcaption}
\usepackage{geometry}

\usepackage{amssymb}
\usepackage{tikz}
\usepackage{caption}
\usetikzlibrary{calc, patterns,arrows.meta}
\usepackage{mathtools}
\usepackage{amsthm}

\usepackage[capitalize,noabbrev]{cleveref}
\usepackage{amsmath}

\newcommand{\R}{\mathbb{R}}

\newcommand{\N}{\mathbb{N}}

\newcommand{\bF}{\mathcal{F}}
\newcommand{\sm}{\mathcal{S}}
\newcommand{\dist}{\mathrm{Dist}}

\newcommand{\afix}{\mathrm{AsFix}}
\newcommand{\CDPS}{\textbf{CIPS}}
\newcommand{\fix}{\mathrm{Fix}}

\newcommand{\taup}{\tau_1}
\newcommand{\taum}{\tau_2}
\newcommand{\bx}{\boldsymbol{x}}
\newcommand{\bn}{\boldsymbol{n}}

\newcommand{\by}{\boldsymbol{y}}

\newcommand{\bv}{\boldsymbol{v}}
\newcommand{\bu}{\boldsymbol{u}}

\newcommand{\br}{\boldsymbol{r}}
\newcommand{\bp}{\boldsymbol{p}}

\newcommand{\bb}{\boldsymbol{b}}

\newcommand{\bW}{\boldsymbol{W}}

\newcommand{\bD}{\boldsymbol{D}}

\newcommand{\bS}{\boldsymbol{S}}

\newcommand{\be}{\boldsymbol{e}}
\newcommand{\bJ}{\boldsymbol{J}}

\newcommand{\bom}{\boldsymbol{w}}
\newcommand{\bmu}{\boldsymbol{\mu}}

\newcommand{\bI}{\boldsymbol{I}}
\newcommand{\bi}{\boldsymbol{1}}

\newcommand{\bX}{\boldsymbol{X}}

\newcommand{\Jac}{\boldsymbol{J}}

\newcommand{\bxi}{\boldsymbol{\xi}}

\newcommand{\conv}{\mathrm{cv}}

\newtheorem{theorem}{Theorem}[section]

\newtheorem{remark}[theorem]{Remark}
\newtheorem{lemma}[theorem]{Lemma}
\newtheorem{corollary}[theorem]{Corollary}
\theoremstyle{definition}
\newtheorem{definition}[theorem]{Definition}

\begin{document}

\begin{frontmatter}

\title{On Unstable Fixed Points in Modern Continuous Hopfield Networks}
	\author{
 	Hans-Peter Beise\\
 	}
\address{Department of Computer Science\\
Trier University of Applied Sciences\\
Schneidershof, 54293 Trier\\
Beise@hochschule-trier.de}

\begin{abstract}
The recently introduced continuous Hopfield network \cite{ramsauer2020hopfield} exhibits large memorization capabilities, which manifest as attractive fixed points of its update rule—a differentiable function consisting of two linear mappings composed with the scaled softmax function. The authors of \cite{ramsauer2020hopfield} provide proofs for the existence and approximate position of such attractive fixed points. For the softmax function alone, the fixed point structure has been fully characterized in earlier work \cite{tivno2009bifurcation}, from which it turns out that for sufficiently large scaling factors there are exponentially more unstable fixed points than attractive ones. In this work, we complement the findings in \cite{ramsauer2020hopfield} by showing that, under natural geometric conditions on the vectors defining the continuous Hopfield network, unstable fixed points must occur, analogous to the findings in \cite{tivno2009bifurcation}. Our results show that, under these geometric conditions, continuous Hopfield networks necessarily admit additional unstable fixed points associated with higher-dimensional faces of the pattern polytope.
\end{abstract}

\begin{keyword}
Fixed Points \sep Continuous Hopfield Networks \sep Discrete Dynamics
\end{keyword}
\end{frontmatter}
\footnotetext{\copyright\ 2025. This manuscript version is made available 
under the CC-BY-NC-ND~4.0 license. 
\texttt{https://creativecommons.org/licenses/by-nc-nd/4.0/}}

\section{Introduction}\label{sec:Introduction}
Hopfield networks, introduced in \cite{hopfield1982neural}, are a classical class of neural networks that implement \textit{associative memory} via stable, attractive fixed points. Recent research has been dedicated to extending the basic idea of Hopfield networks. A unifying framework and overview of some recent contributions is presented in \cite{krotov2021large}. Of particular interest for this work is the so-called \emph{modern continuous Hopfield network}, introduced in \cite{ramsauer2020hopfield}. The update rule, referred to as the \textbf{continuous Hopfield function} in the sequel, has the form
\begin{equation}\label{networkFunHopf}
	f(\bx) = \bW \,\sm_\beta(\bW^T\bx),
\end{equation}
with \(\bW \in \R^{d \times n}\) consisting of columns \(\bom_1, \ldots, \bom_n \in \R^d\), which represent the patterns to be stored. The \textbf{scaled softmax function} is defined by
\begin{equation}\label{def_softmax}
    \sm_\beta(\bx) = \left( \frac{\exp(\beta \,\bx^{(1)})}{Z}, \ldots, \frac{\exp(\beta \,\bx^{(n)})}{Z} \right)^T, \quad Z := \sum_{j=1}^n \exp(\beta\, \bx^{(j)}).
\end{equation}

The function \(f\) should memorize the given patterns as attractive, stable fixed points. That is, for a given input $\bxi \in \R^d$, the iterative application of $f$ on $\bxi$ converges to a fixed point near some $\bom_j$. The continuous Hopfield function \eqref{networkFunHopf} exhibits close connections to the scaled dot-product attention function used in transformers \cite{vaswani2017attention}, the neural network architecture underlying large language models. This attention function writes as
\begin{equation}\label{scaledAttention}
	A(\bX) = \bW_3\bX \,\sm_\beta(\,\bX^T\bW_2^T \bW_1\bX),
\end{equation}
where $\bX$ is an input matrix and the $\bW_j$, $j = 1,2,3$, are trainable parameter matrices and $\sm_\beta$ is applied column-wise. Based on this observation, different transformer-like architectural blocks designed to implement explicit memorization mechanisms are proposed in \cite{ramsauer2020hopfield}. The utility of this approach is demonstrated in \cite{widrich2020modern}, for instance. We will not explicitly focus on this attention function in the sequel. 

For sufficiently large \(\beta\), it has been shown in \cite{ramsauer2020hopfield} that \(f\) possesses attractive fixed points in small neighborhoods of the vectors \(\bom_j\). If $\bom_1,\ldots,\bom_n$, the columns of $\bW$, are pairwise distinct and lie on a sphere (cf.~\cite[Equation (308)]{ramsauer2020hopfield}), then for sufficiently large $\beta$ there are attractive fixed points of $f$ in small neighborhoods of each $\bom_j$. Indeed, their inequality \cite[Equation (311)]{ramsauer2020hopfield} always holds if $\beta$ is sufficiently large, and the results in the sequel then yield these fixed points. In this work, we will assume that the $\bom_j$ are pairwise distinct unit-length vectors. The main property derived from this condition is that each $\bom_j$ defines a corner of the convex hull of $\bom_1,\ldots,\bom_n$, and our analysis can be adapted to hold if only this latter condition is satisfied.

The fixed point results in \cite{ramsauer2020hopfield} are derived by means of the Banach fixed point theorem, which always yields attractive fixed points. The complete fixed point structure, particularly the existence of unstable fixed points, is not covered.

The scaled softmax function \(\sm_\beta\) itself has a well-structured fixed point distribution, as analyzed in detail in \cite{tivno2009bifurcation}. In that work, a complete understanding of the evolution of fixed points as $\beta$ changes is given. It should be noted that the analysis in \cite{tivno2009bifurcation} is formulated in terms of $1/\beta$, which is often referred to as the temperature. A crucial observation underlying this analysis is that, for a fixed point of $\sm_\beta$,
\begin{equation}\label{3fp_eq1}
    \frac{\exp(\beta\, \bx^{(j)})}{Z} = \bx^{(j)}
\end{equation}
for all $j$, where $Z := \sum_{j=1}^n \exp(\beta \bx^{(j)})$. Due to the strict convexity of the exponential function, this equation can have at most two solutions, which implies that the components of $\bx$ decompose into at most two classes within which the components take equal values. Following this, it is observed that the fixed points can only lie on line segments in the standard simplex $\Delta^{n-1}$ passing through its arithmetic mean $1/n\,\bi$, with $\bi := (1,\ldots,1)^T$, connecting two faces of dimension $k$ and $l$ with $k + l = n - 1$. Indeed, it is shown in \cite{tivno2009bifurcation} that for $\beta > n$, there are two fixed points on each such line in addition to the fixed point $1/n\,\bi$. Among these $2^n - 1$ fixed points, only the one on the line to each vertex of $\Delta^{n-1}$ is attractive for such a $\beta$. As $\beta$ increases, these fixed points move towards the respective faces of $\Delta^{n-1}$. We provide a more rigorous formulation of the exact results of \cite{tivno2009bifurcation} in \ref{sec:append}.

The aforementioned results suggest that a similar behavior may occur in modern continuous Hopfield networks $f$ in \eqref{networkFunHopf}, and motivate our investigation of their fixed point structure of continuous Hopfield functions beyond attractive fixed points. We show that, under suitable geometric conditions on the stored patterns, modern continuous Hopfield networks possess additional unstable fixed points. These additional fixed points appear near certain faces of the convex polytope spanned by the stored patterns. Figure~\ref{fig:netdynamics} illustrates this by means of a simple numerical example.

\section{Main Result}\label{sec_mainResult}
In this section, we prepare our main result and give an informal version.
We begin by introducing the notations employed throughout. Vectors and matrices are denoted by boldface letters, and $\bx^{(j)}$ denotes the $j$-th component of a vector $\bx$. By $\be_1,\ldots,\be_n$ we mean the standard unit vectors in $\R^n$. The Euclidean norm is denoted by $\Vert \bx\Vert$. By $\Delta^{n-1}$ we denote the standard simplex 
\[
\Delta^{n-1} := \left\{ \bx \in [0,\infty)^n : \sum_{j=1}^n \bx^{(j)} = 1 \right\}.
\]
The vector $\bi \in \R^n$ is defined by $\bi := (1,\ldots,1)^T$. The composition of functions is denoted by $\circ$. For a finite set $M$, we write $|M|$ for its cardinality. The convex hull of a set $M \subset \R^d$ is denoted by $\conv M$ and its boundary is denoted by $\partial M$. For $n \in \N$, we set $[n] := \{1, \ldots, n\}$. For maxima over index sets we use the extended-real convention $\max \emptyset := -\infty$ and $\min\emptyset := +\infty$.

Additional notation is introduced as needed in subsequent sections.

Let \( f: \R^d \to \R^d \) be a map. A point \( \bx^* \in \R^d \) is called a \textbf{fixed point} of \( f \) if \( f(\bx^*) = \bx^* \). The set of all fixed points is denoted by \( \fix(f) \).
A fixed point \( \bx^* \) is called \textbf{attractive} if there exists an open neighborhood \( \mathcal{U} \ni \bx^* \) such that for all \( \bx \in \mathcal{U} \), the iterates \( f^{\circ k}(\bx) \to \bx^* \) as \( k \to \infty \). The set of all $\bx$ that converge to $\bx^*$ in this way is known as the \textbf{basin of attraction} of $\bx^*$.
A fixed point is further called \textbf{asymptotically stable} if it is attractive and if for every open neighborhood \( \mathcal{U} \) of \( \bx^* \), there exists an open neighborhood \( \mathcal{O} \) of \( \bx^* \) such that
\begin{equation}\label{defstatt}
    \{ f^{\circ k}(\bx) :\, \bx \in \mathcal{O},\, k \in \N \} \,\subset\, \mathcal{U}.
\end{equation}
For continuously differentiable maps on an open domain, as considered here, asymptotic stability follows if the spectral radius $\rho$ of the Jacobian $\Jac$  satisfies \( \rho(\Jac(\bx^*)) < 1 \). The set of all asymptotically stable fixed points is denoted by \( \afix(f) \). Conversely, if the spectral radius is greater than one, the fixed point is said to be \textbf{unstable}. That is, there exist directions in which nearby points are repelled, and hence $\bx^*$ is not locally attractive. However, it is possible in principle that orbits initially repelled may later return and converge along other directions. For the case \( \rho(\Jac(\bx^*)) = 1 \), all behaviors are possible: the fixed point may be asymptotically stable, unstable, or stable but not attractive.
In the context of Hopfield networks, the term \emph{spurious fixed point} refers to fixed points that are not explicitly intended to be memorized pattern. In our case that is, they do not lie near any pattern vector \( \bom_1,\ldots\bom_n\).


The fixed points of the scaled softmax function $\sm_\beta$ are known to lie on line segments connecting faces of $\Delta^{n-1}$, as analyzed in \cite{tivno2009bifurcation} and briefly revisited in \ref{sec:append}. In contrast, for $f(\bx) = \bW\,\sm_\beta(\bW^T\bx)$ as in \eqref{networkFunHopf}, an explicit analytical characterization of its fixed points is generally out of reach. To prove the existence of (possibly unstable) fixed points in this case, we make use of tools from topological degree theory.
The topological mechanism underlying our main result is entailed in the following theorem on fixed points. This result can be considered as fixed point version of the Poincar\'e–Miranda theorem \cite{miranda1940osservazione, kulpa1997poincare}. We refer to Section \ref{sec_proofs} for details on the notation.  A short proof of this version following \cite{vrahatis1989short} is provided later.

\begin{theorem}[Poincar\'e--Miranda type fixed points]
\label{prop_miranda}
Let $f:P\rightarrow \R^d$ be continuous on a convex, compact polytope $P\subset \R^d$ with non-empty interior. Let $H(F)$ denote the supporting hyperplane of a facet $F\in \bF^{d-1}(P)$, where $H(F)=\{\bx\in \R^d: \bn_F^T \bx=b_F\}$ for some outward pointing normal vector $\bn_F\in \R^d$ and constant $b_F\in \R$, i.e. $P\subset\{\bx\in \R^d: \bn_F^T\,\bx \le b_F\}$. 

Assume that the facets can be partitioned into two disjoint classes $C_1\cup C_2=\bF^{d-1}(P)$ with associated orthogonal subspaces $U_1,U_2\subset\R^d$ satisfying $U_1\oplus U_2=\R^d$, such that for every $F\in C_k$, $k\in\{1,2\}$, we have $\bn_F\in U_k$, and moreover
\begin{align}
f(\bx)&\in \{\bx\in \R^d: \bn_F^T\,\bx \,\le\, b_F\} &&\text{for all }F\in C_1\text{ and all }\bx\in F,\label{mirandaCondition_01}\\
f(\bx)&\in \{\bx\in \R^d: \bn_F^T\,\bx \,\ge\, b_F\} &&\text{for all }F\in C_2\text{ and all }\bx\in F.\label{mirandaCondition_02}
\end{align}
Then $f$ has a fixed point in $P$.
\end{theorem}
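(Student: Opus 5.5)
The plan is to reduce the statement to Brouwer's fixed point theorem by a reflection trick: all conditions will become ``inward-pointing'' ones, and a nearest-point retraction onto $P$ then does the rest. Let $\pi_1,\pi_2$ denote the orthogonal projections onto $U_1$ and $U_2$, so that $\pi_1+\pi_2$ is the identity. Define the auxiliary continuous map
\[
h:P\to\R^d,\qquad h(\bx):=\pi_1 f(\bx)+\pi_2\bigl(2\bx-f(\bx)\bigr).
\]
\textbf{Step 1 (same fixed points).} If $h(\bx)=\bx$, apply $\pi_1$ and $\pi_2$ to both sides. This gives $\pi_1 f(\bx)=\pi_1\bx$ and $\pi_2\bx=\pi_2(2\bx-f(\bx))$, i.e.\ $\pi_2 f(\bx)=\pi_2\bx$. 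Hence $f(\bx)=\bx$, and it suffices to find a fixed point of $h$ in $P$.

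\textbf{Step 2 (all facet conditions become inward).} Let $\bx\in F$. If $F\in C_1$, then $\bn_F\in U_1$, so $\bn_F^T h(\bx)=\bn_F^T f(\bx)\le b_F$ by \eqref{mirandaCondition_01}. If $F\in C_2$, then $\bn_F\in U_2$, and using $\bn_F^T\bx=b_F$ together with \eqref{mirandaCondition_02},
\[
\bn_F^T h(\bx)=2b_F-\bn_F^T f(\bx)\le b_F .
\]
Thus $\bn_F^T h(\bx)\le b_F$ holds for every facet $F$ and every $\bx\in F$.

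\textbf{Step 3 (Brouwer plus retraction).} Let $r:\R^d\to P$ be the metric projection onto the compact convex set $P$; it is continuous (indeed $1$-Lipschitz). The map $r\circ h:P\to P$ is continuous on a compact convex set, so Brouwer's theorem gives $\by\in P$ with $\by=r(h(\by))$. Suppose, for contradiction, that $h(\by)\neq\by$. Then $h(\by)\notin P$, so $\by\in\partial P$, and $h(\by)-\by$ lies in the normal cone $N_P(\by)$.

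For a polytope with non-empty interior written as the intersection of its facet half-spaces, this normal cone is the conic hull of the normals $\bn_F$ of the facets $F$ containing $\by$. Hence
\[
h(\by)-\by=\sum_{F\ni\by}\lambda_F\bn_F,\qquad \lambda_F\ge 0 .
\]
Since $\bn_F^T\by=b_F$ for each such $F$, Step 2 yields
\[
\|h(\by)-\by\|^2=\sum_{F\ni \by}\lambda_F\bigl(\bn_F^T h(\by)-b_F\bigr)\le 0,
\]
a contradiction. Therefore $h(\by)=\by$, and by Step 1, $\by$ is a fixed point of $f$.

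The only step requiring some care is the description of the normal cone of $P$ at a boundary point as generated by the active facet normals. This is standard for polyhedra given by their facet inequalities (via Farkas' lemma, since $P=\bigcap_F\{\bn_F^T\bx\le b_F\}$ when $P$ is full-dimensional). I would cite it rather than reprove it. The remaining verifications are the direct computations in Steps 1 and 2.
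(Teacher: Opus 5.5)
Your argument is correct, but it closes the proof with a different tool than the paper.

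The paper first normalizes coordinates: a translation puts $0$ in the interior of $P$, so that $b_F>0$, and a rotation makes $U_1,U_2$ coordinate subspaces. It then uses homotopy invariance of the Brouwer degree. The map $g(\bx)=f(\bx)-\bx$ is joined by a straight-line homotopy to the linear map equal to $-\mathrm{id}$ on $U_1$ and $+\mathrm{id}$ on $U_2$. The facet conditions keep the homotopy zero-free on $\partial P$ for $t>0$, and zeros of $g$ on $\partial P$ are handled as a separate trivial case. The nonzero degree of the linear endpoint then forces a zero of $g$.

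Your map satisfies $h(\bx)-\bx=(\pi_1-\pi_2)\bigl(f(\bx)-\bx\bigr)$, so the same reflection appears. You apply it to the displacement field rather than using it as a comparison map. This turns the mixed inward/outward facet conditions into a uniform inward condition on every facet. Brouwer applied to $r\circ h$, together with the normal-cone description of a full-dimensional polyhedron, then finishes the job.

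Your route needs only Brouwer's theorem and Farkas-type convex analysis. It requires no coordinate normalization or positivity of $b_F$, handles boundary fixed points automatically, and avoids extending degree theory to merely continuous maps. The paper's route gives the extra information $\deg(f-\mathrm{id},P,0)=\pm1$, which is stable under small perturbations of $f$, though the paper never uses this.
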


We next introduce a geometric condition on subsets of the pattern vectors $\bom_1, \ldots, \bom_n$ that enables application of \cref{prop_miranda}.

\begin{definition}\label{def_CDPS}
Let $\bom_1, \ldots, \bom_n \in \R^d$. A subset $\{\bom_j : j \in J\}$ with $J \subset [n]$ is called \textbf{Convexly Inner Product Separated (\CDPS)} with respect to $J_0\subset [n]$, with $J_0\cap J=\emptyset$, if
\begin{equation*}
    \max\{\bom_j^T \bv : j \in J\} > \max\{\bom_j^T \bv : j \in J_0\}\quad\mathrm{for \, all}\ \bv \in \conv\{\bom_j : j \in J\} .
\end{equation*}
We will assume $J_0=[n]\setminus J$, if not explicitly stated otherwise, and simply write that the $\{\bom_j : j \in J\}$ are \CDPS.

\end{definition}

We now state an informal version of our main result. A rigorous version, together with proofs, is presented in the following section.

\begin{theorem}[Informal]\label{theo_hopf_miranda}
Let $f$ be the continuous Hopfield function defined in \eqref{networkFunHopf}, with pairwise distinct, unit-length vectors $\bom_1, \ldots, \bom_n \in \R^d$ as columns of $\bW$ and $J\subset [n]$.
\begin{enumerate}
    \item Suppose $\{\bom_j : j \in J\}$ is \CDPS\, and that each facet of $P := \conv\{\bom_j : j \in J\}$ corresponds to a \CDPS\, subset. Then, for sufficiently large $\beta > 0$, $f$ has a fixed point located in a neighborhood of $P$. Moreover, as $\beta \to \infty$, this fixed point converges to $P$.

    \item If, in addition, every face of $P$ corresponds to a \CDPS\, subset, then for sufficiently large $\beta$, $f$ has a fixed point associated with each such face. Among these face-associated fixed points, only the fixed points near the vertices (i.e., the $\bom_j$) are asymptotically stable.
\end{enumerate}
\end{theorem}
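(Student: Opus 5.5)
The plan is to apply \cref{prop_miranda} on a suitably enlarged and thickened version of $P=\conv\{\bom_j : j\in J\}$. Let $k=\dim P$ and $A=\mathrm{aff}\,P$. Let $U_1$ be the linear direction space of $A$ and $U_2=U_1^\perp$. For small $\varepsilon,\delta>0$ we define the polytope
\[
P_{\varepsilon,\delta}=\{\bx : \bx=\by+\bu,\ \by\in A,\ \operatorname{dist}_A(\by,P)\text{-type constraints } \bn_F^T\by\le b_F+\varepsilon,\ \bu\in U_2,\ \|\bu\|_\infty\le\delta\},
\]
where the in-plane facets come from the facets $F$ of $P$, pushed outward by $\varepsilon$. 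The normal facets form a small box in $U_2$ around $A$ (after fixing an orthonormal basis of $U_2$). This polytope has non-empty interior, and its facet normals lie in $U_1$ (the class $C_1$) or in $U_2$ (the class $C_2$). To obtain a fixed point we need to check \eqref{mirandaCondition_01} on the pushed-out in-plane facets and \eqref{mirandaCondition_02} on the box facets.

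\emph{Key step 1: concentration of the softmax weights.} For $\bx$ near $P$ (within distance $\varepsilon+\sqrt{d}\delta$), write $f(\bx)=\sum_j p_j\bom_j$ with $p=\sm_\beta(\bW^T\bx)$. By the \CDPS\ property and compactness of $P$, there is a uniform gap
\[
\min_{\bv\in P}\Bigl(\max_{j\in J}\bom_j^T\bv-\max_{j\notin J}\bom_j^T\bv\Bigr)=:\gamma>0 .
\]
This gap persists (as $\gamma/2$) on a neighbourhood of $P$. Hence $\sum_{j\notin J}p_j\le n e^{-\beta\gamma/2}$. Since the points $\bom_j$ with $j\in J$ lie in $A$, the $U_2$-component of $f(\bx)-a_0$ is $O(e^{-\beta\gamma/2})$ for a base point $a_0\in A$. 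Choosing $\delta$ larger than this bound (but still small) makes $f$ map each box facet strictly toward $A$. Since $\bn_F^T f(\bx)$ is then less than $b_F$ on the facet $\bn_F^T\bx=b_F$ with $\bn_F\in U_2$, this is the inward condition. The normals for $C_2$ in \cref{prop_miranda} point the other way, so I will simply orient the box so that the required inequality becomes this one. Equivalently, I will apply the theorem to $2\bx-f(\bx)$-type adjustments, or swap the roles of $C_1$ and $C_2$ so that the contraction is on whichever class matches. I expect to settle the sign convention by testing $f$ versus the identity. The expansion in the plane directions (instability) is what forces the $C_2$ inequality there. So I will assign the in-plane facets to $C_2$ and the normal box to $C_1$.

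\emph{Key step 2 (main obstacle): the in-plane facets.} For $\bx$ on the pushed-out facet of $P$ parallel to a facet $F$, I will use that $F$ is \CDPS\ with respect to $J\setminus J_F$. For $\bv$ near $F$, the largest inner products $\bom_j^T\bv$ with $j\in J$ are then attained on $J_F$, with a uniform gap. Hence $p$ concentrates on $J_F$ up to $O(e^{-c\beta})$, and $f(\bx)$ lies within $O(e^{-c\beta})$ of $F$. Consequently $\bn_F^T f(\bx)\le b_F+O(e^{-c\beta})<b_F+\varepsilon$. So $f$ pulls the facet back toward $F$, which is the inequality opposite to that for the pushed-out hyperplane. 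This is consistent with $C_2$ if the orientation is chosen as above. I expect this to be the delicate part. It requires uniformity of the gap across the whole thickened facet, including its boundary where several facets meet. There I will use that lower faces are intersections of facets, together with a careful choice of $\varepsilon(\beta)\to0$ with $\varepsilon\gg e^{-c\beta}$. Such a choice also gives convergence of the fixed point to $P$ as $\beta\to\infty$.

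\emph{Part 2.} I will apply Part 1 to every face $G$ of $P$, which is itself \CDPS\ with \CDPS\ facets. This gives a fixed point $\bx_G$ near $G$, and the fixed points are distinct for distinct faces once $\beta$ is large. For stability, I will compute $\Jac(\bx)=\beta\bW(\mathrm{diag}(p)-pp^T)\bW^T$. Near a vertex, $p$ is nearly a unit vector, so $\mathrm{diag}(p)-pp^T=O(e^{-c\beta})$ and $\rho<1$. Near the relative interior of a face $G$ with $\dim G\ge1$, the fixed point $\bx_G=\sum_{j\in J_G}p_j\bom_j$ has at least two weights bounded below. To show this, I will argue by contradiction: if all but one weight vanished, then $\bx_G$ would be near a vertex, contradicting the construction inside the $\varepsilon$-shrunk neighbourhood. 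Alternatively, I will use the Tiňo-type bound that the weights are in a non-degenerate regime. The covariance $\bW(\mathrm{diag}(p)-pp^T)\bW^T$ restricted to the direction space of $G$ is then bounded below by a positive constant, and multiplying by $\beta$ gives $\rho(\Jac)>1$ for large $\beta$. This yields instability.
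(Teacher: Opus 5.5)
The sign question you defer is not a convention to settle later. It is the crux of the argument, and in the region you build it comes out the wrong way. In \cref{prop_miranda} the normals are fixed as outward ($P\subset\{\bx:\bn_F^T\bx\le b_F\}$), so no facet can be ``reoriented'' into $C_2$.

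Because you push the in-plane facets \emph{outward} by $\varepsilon$, your Key step~2 gives $\bn_F^T f(\bx)\le b_F+\varepsilon$ on $\{\bn_F^T\bx=b_F+\varepsilon\}$. That is \eqref{mirandaCondition_01}, not \eqref{mirandaCondition_02}. Together with Key step~1, every facet of your region is mapped inward. This already follows from $\dist(f(\bx),P)=O(e^{-c\beta})$, i.e.\ from the \CDPS\ property of $J$ alone, so the facet hypothesis does no work in your argument.

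All-inward boundary behaviour does give \emph{some} fixed point: take $C_1$ to be all facets, so the comparison map in the proof of \cref{prop_miranda} becomes $-\bx$, of degree $(-1)^d$. Replacing $f$ by $2\bx-f(\bx)$ in some directions leaves $\fix(f)$ unchanged and only flips inequalities, so it adds no information. The trouble is that the attractive fixed point near any vertex $\bom_j$, $j\in J$, lies in your enlarged region and by itself has index $\operatorname{sign}\det(\Jac-\bI)=(-1)^d$. For example, for $P=\conv\{\bom_1,\bom_2\}$ the region contains two attractors and one saddle, with indices $(-1)^d,(-1)^d,(-1)^{d-1}$. The degree argument therefore cannot force anything beyond a vertex attractor. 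You obtain only the nearly vacuous literal reading of Part~1, which the vertex attractors already supply.

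This is fatal for Part~2. For every face $G$ with $\dim G\ge 1$ you need a fixed point at a $\beta$-independent distance from all $\bom_j$. That is what makes the fixed points of different faces distinct, and it is what your (correct) covariance bound, essentially \cref{lem_uniform_instab}, needs as input. Your instability step appeals to an ``$\varepsilon$-shrunk neighbourhood'', but the region you constructed is enlarged. It may return the same vertex attractor for several faces.

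The missing idea is to \emph{shrink} in the plane of $P$ while thickening in the normal directions. Work on $(P_\eta)^\varepsilon$ with $P_\eta=\eta(P-\bb_P)+\bb_P$, keeping $\eta<1$ fixed as $\beta\to\infty$.
\begin{itemize}
\item Take $\bx$ on the extrusion of the shrunk copy of a facet $F$, and let $\gamma_F>0$ be the \CDPS\ gap of $I(F)$. Choosing $\eta$ close to $1$ and $\varepsilon$ small gives $\mu:=\dist(\bx,F)\le\gamma_F/3$.
\item \cref{lem_hopf_conv}, applied with $I(F)$ in place of $J$, then puts $f(\bx)$ within $2n\,e^{-\beta\gamma_F/3}$ of $F$. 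This is where the facet hypothesis genuinely enters.
\item For large $\beta$ this bound is smaller than $(1-\eta)\,\dist(\bb_P,\mathrm{aff}\,F)$, the gap between the shrunk facet's hyperplane and the parallel one through $F$. So $f(\bx)$ lands beyond the shrunk hyperplane, which is \eqref{mirandaCondition_02}.
\item Your Key step~1 supplies \eqref{mirandaCondition_01} on the normal facets.
\end{itemize}
Now \cref{prop_miranda} applies with genuinely mixed classes. The fixed point lies in $(P_\eta)^\varepsilon$, which stays uniformly away from the $\bom_j$ with $j\in J$ because of the shrinking, and from those with $j\notin J$ by \CDPS. Choosing the sets $(G_{\eta_G})^{\varepsilon_G}$ pairwise disjoint then gives distinct fixed points for distinct faces, and the instability bound applies.
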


\begin{figure*}[h]
\vskip 0.2in
\begin{center}
		    \includegraphics[width=0.2\linewidth]{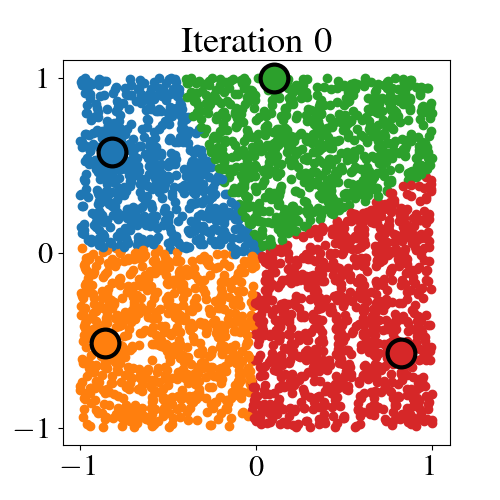}\hfill
			\includegraphics[width=0.2\linewidth]{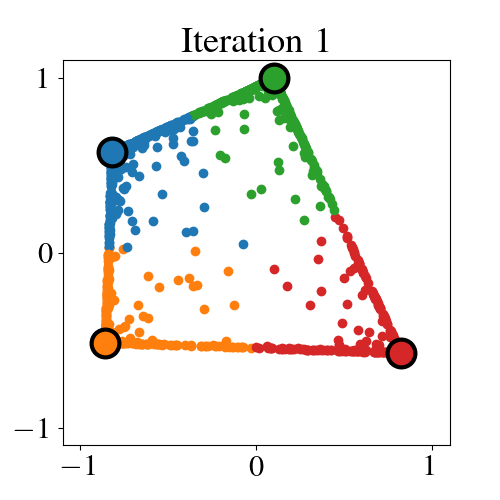}\hfill
			\includegraphics[width=0.2\linewidth]{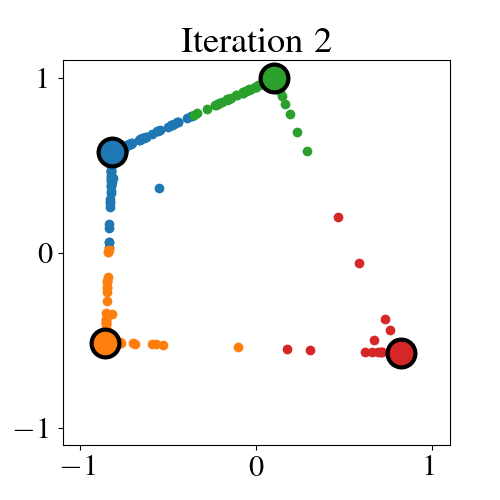}\hfill
                \includegraphics[width=0.2\linewidth]{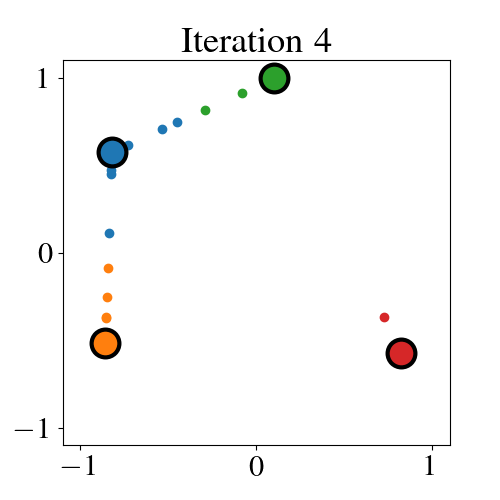}\hfill
              \includegraphics[width=0.2\linewidth]  
            {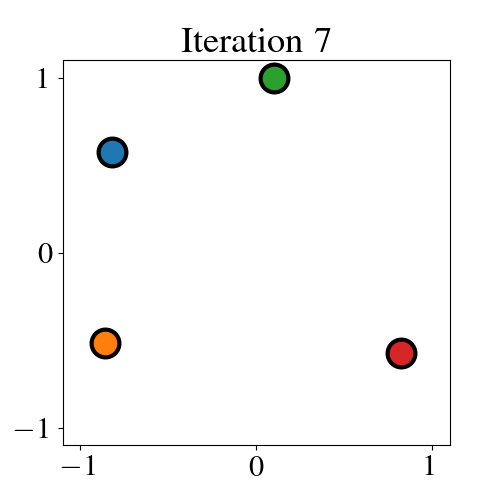}\hfill
		    \includegraphics[width=0.2\linewidth]{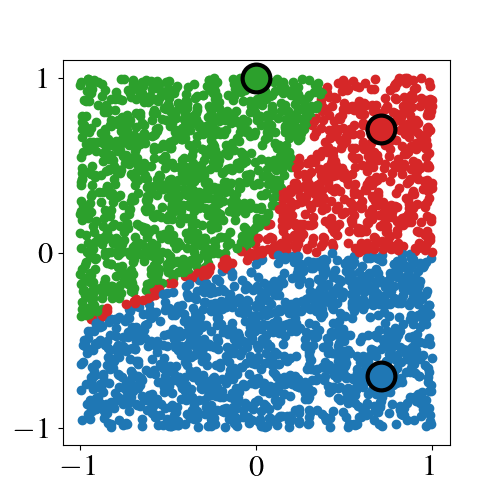}\hfill
			\includegraphics[width=0.2\linewidth]{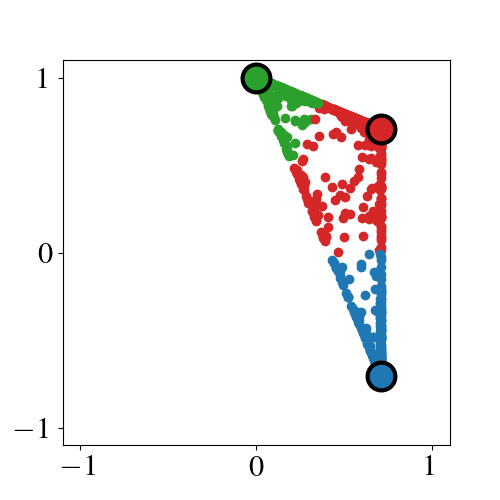}\hfill
			\includegraphics[width=0.2\linewidth]{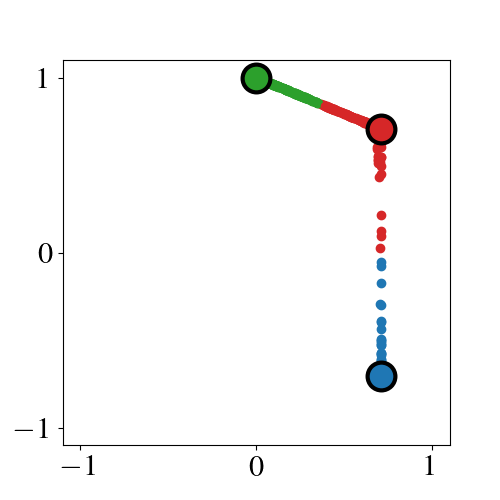}\hfill
            \includegraphics[width=0.2\linewidth]{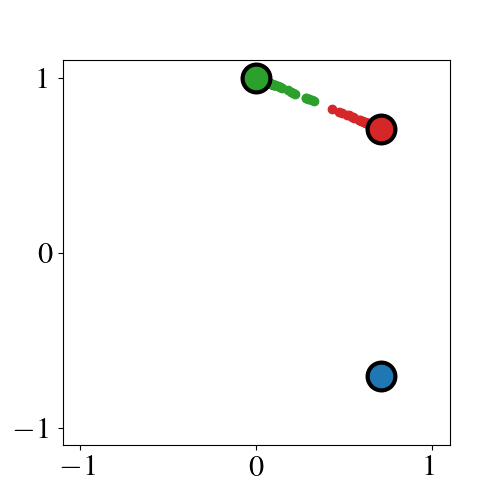}\hfill
              \includegraphics[width=0.2\linewidth]  
            {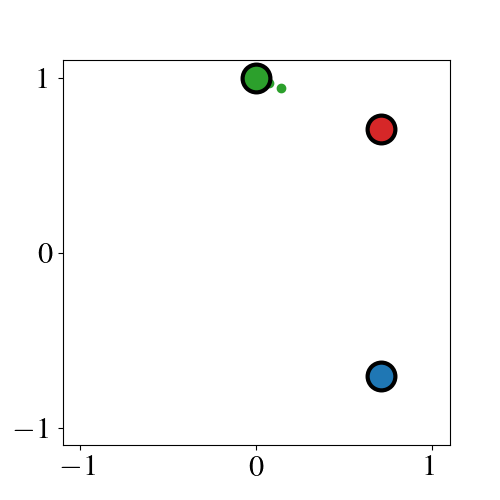}\hfill

    \caption{Dynamics of continuous Hopfield function (\ref{networkFunHopf}) for 2D data with $\beta=15$. Small dots depict the evolution of 3000 uniform random points in $[-1,1]^2$. Attractive fixed points, approximately equal to four vectors $\bom_j$ defining $f$ are represented as large, bordered dots. The evolution after 0, 1, 2, 4, and 7 iterative applications of $f$ is shown from left to right. The colors correspond to the fixed point to which the points converge.  
}
	\label{fig:netdynamics}
 \end{center}
\vskip -0.2in
\end{figure*}
Figure~\ref{fig:netdynamics} illustrates how the dynamics of $f$ can exhibit both, asymptotically stable and unstable fixed points, depending on the geometric arrangement of the $\bom_j$. In particular, the presence of unstable fixed points near 1-faces of $\conv\{\bom_1,\ldots,\bom_n\}$ becomes evident. The latter observation can be generalized to higher dimensions under certain conditions. Indeed, consider the case $\bom_j = \be_j$ for $j = 1, \ldots, n$, i.e., $\bW$ is the identity and thus $f = \sm_\beta$. The structure of $\fix(\sm_\beta)$ evolves stepwise with increasing $\beta$ as laid out in more detail in \ref{sec:append} (cf. Figures~\ref{fig:hk_01},~\ref{fig:hk_02}, and \cite[Figure 2]{tivno2009bifurcation}). That means, for fixed $n$, there exists a threshold $m(n)$ such that
$\beta < m(n)$ implies $\fix(\sm_\beta) = \afix(\sm_\beta) = \left\{\frac{1}{n} \bi\right\}$.
Whereas for $\beta > m(n)$, the number of fixed points grows, as $\beta$ increases, from $\vert \fix(\sm_\beta)\vert=2n+1$ and reaches its maximum of $2^n - 1$ fixed points once $\beta > n$. In this course, there are several 
bifurcation points, similar to $m(n)$, beyond which the number of fixed points increases, c.f \cite{tivno2009bifurcation} and \ref{sec:append}. As lower thresholds $m(n)$, we have for instance $m(3) \approx 2.75$, $m(10) \approx 4.56$, and $m(1000) \approx 10.11$ (see Table~\ref{tab:min_h_n}).
When the vectors $\bom_1, \ldots, \bom_n$ are approximately orthogonal, one expects similar behavior. However, for more distorted configurations, the situation may differ significantly.

We next formulate a lemma that quantifies how the geometry of the stored patterns influences the contraction behavior of $f$ near certain faces of the convex hull. This will be used in Section~\ref{sec_proofs} to apply \cref{theo_hopf_miranda}.

\begin{lemma}\label{lem_hopf_conv} 
Let $f$ be the continuous Hopfield function defined in (\ref{networkFunHopf}), with unit-length vectors $\bom_1,\ldots,\bom_n\in \R^d$ being the columns of $\bW$. For indices $J\subset[n]$ and some $\bv\in \R^d$
let \begin{equation*}
\taup(\bv)\, :=\, \max_{j\in J}\bom_j^T \bv, \quad \taum(\bv)\, := \,\max_{j\in [n]\setminus J}\bom_j^T \bv.
\end{equation*}
and $\delta := \taup - \taum$. Then for $\bx=\bv+\varepsilon \br$ with $\bv\in\conv\{\bom_j:\, j\in J\}$, $\br\in\R^d$ a unit-length vector, and $\varepsilon>0$, we have
\begin{equation}\label{dist_estimate}
    \dist\left(f(\bx), \conv\{\bom_j:\, j\in J\}\right)\,<\,2\,(n-\vert J\vert
)\, \exp(\beta(-\delta+2\varepsilon)).
\end{equation}
\end{lemma}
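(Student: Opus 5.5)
We write $p := \sm_\beta(\bW^T\bx)\in\Delta^{n-1}$, so that $f(\bx)=\sum_{j=1}^n p^{(j)}\bom_j$. The plan is to compare $f(\bx)$ with the point of $\conv\{\bom_j:\, j\in J\}$ obtained by renormalizing the weights on $J$. Set $s:=\sum_{j\in J}p^{(j)}$ and $\by:=s^{-1}\sum_{j\in J}p^{(j)}\bom_j\in\conv\{\bom_j:\,j\in J\}$. Then
\[
f(\bx)-\by=\sum_{j\notin J}p^{(j)}\bom_j-(1-s)\,\by ,
\]
and since all $\bom_j$ and $\by$ have norm at most one and $1-s=\sum_{j\notin J}p^{(j)}$,
\[
\dist\bigl(f(\bx),\conv\{\bom_j:\,j\in J\}\bigr)\le\|f(\bx)-\by\|\le 2\sum_{j\notin J}p^{(j)} .
\]
This accounts for the factor $2$. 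It remains to show $p^{(j)}<\exp(\beta(-\delta+2\varepsilon))$ for every $j\notin J$; summing over the $n-|J|$ such indices then gives \eqref{dist_estimate}.

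For a single $j\notin J$, we bound the partition function from below by its largest term on $J$. Pick $i\in J$ with $\bom_i^T\bv=\taup(\bv)$. Then
\[
p^{(j)}=\frac{\exp(\beta\bom_j^T\bx)}{Z}\le\frac{\exp(\beta\bom_j^T\bx)}{\exp(\beta\bom_i^T\bx)+\exp(\beta\bom_j^T\bx)}<\exp\bigl(\beta(\bom_j^T\bx-\bom_i^T\bx)\bigr).
\]
The inequality is strict because $Z$ contains at least the two positive terms indexed by $i$ and $j$. Next we use $\bx=\bv+\varepsilon\br$ with $\|\br\|=\|\bom_k\|=1$, so by Cauchy--Schwarz $|\bom_k^T\bx-\bom_k^T\bv|\le\varepsilon$ for every $k$. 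Hence
\[
\bom_j^T\bx-\bom_i^T\bx\le\taum(\bv)+\varepsilon-\taup(\bv)+\varepsilon=-\delta+2\varepsilon ,
\]
which yields the required bound on $p^{(j)}$.

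The only delicate point is keeping the inequality strict and handling degenerate cases. If $J=[n]$, then $f(\bx)$ already lies in the hull, the distance is $0$, and the right-hand side is $0$ by the conventions $\taum=-\infty$ and $\exp(-\infty)=0$, so strictness fails there. We would note this separately, or read the statement for $J\neq[n]$. If $J=\emptyset$, the hull is empty and the statement is vacuous. Otherwise, with $J\neq\emptyset$ and $J\neq[n]$, the strict single-term bound above carries through the sum. The $\dist$ inequality itself needs no strictness, since the strict step on the $p^{(j)}$ propagates.
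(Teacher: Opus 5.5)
Your proof is correct and follows the paper's argument. Your $\by$ is exactly the paper's renormalized point $\bv^*=\bv_1/q$, and both proofs bound the error by twice the mass $\sum_{j\notin J}p^{(j)}$, using $Z\ge\exp(\beta\,\bom_i^T\bx)$ for a maximizing $i\in J$ together with the $\pm\varepsilon$ Cauchy--Schwarz shifts. Keeping $\exp(\beta\,\bom_j^T\bx)$ in the denominator is a small improvement: it actually gives the strict inequality claimed in the statement, whereas the paper's proof ends with $\le$. Your remark that strictness fails for $J=[n]$ under the convention $\max\emptyset=-\infty$ is also accurate.
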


This lemma helps explain the dynamics observed in Figure~\ref{fig:netdynamics}, where orbits of points under iterated application of $f$ are seen to approach the convex hull of the pattern vectors, before converging to the attractive fixed points. The unstable fixed points near lower-dimensional faces appear at the boundaries of basins of attraction. Furthermore, the bottom row of Figure~\ref{fig:netdynamics} suggests that the existence of such unstable fixed points depends sensitively on the geometry of the pattern configuration. For instance, if three unit-length vectors $\bom_1, \bom_2, \bom_3$ are such that $\bom_1$ and $\bom_2$ are orthogonal and $\bom_3:=1/\sqrt{2}(\bom_1+\bom_2)$. Then $\{\bom_1, \bom_2\}$ is not a \CDPS\, subset of $\{\bom_1,\bom_2,\bom_3\}$. In this case, for sufficiently large $\beta$, \cref{lem_hopf_conv} implies that the vectors on $\conv\{\bom_1,\,\bom_2\}$ are either attracted by $\conv\{\bom_1,\,\bom_3\}$ or $\conv\{\bom_2,\,\bom_3\}$ and in the neighborhood of $\bom_1,\, \bom_2$ we have a unique attractive fixed points by the Banach fixed point theorem, cf. \cite[Lemma A6]{ramsauer2020hopfield}. Thus, there will be no fixed point near $\conv\{\bom_1,\,\bom_2\}$ other than these attractive fixed points.

\begin{remark}\label{rem:not_exact_convHull}
\begin{enumerate}
    \item The \CDPS\, condition is not vacuous. For instance,
    when the $\bom_j$ are obtained by applying a linear transformation
    with moderate condition number to the standard basis vectors, numerical sampling shows the condition holds for a substantial fraction of randomly sampled faces (see
    Section~\ref{sec:num_experiments} for details).
    
    \item Figure~\ref{fig:netdynamics} illustrates that, for sufficiently large values of $\beta$ the dynamics is strongly attracted toward lower-dimensional faces of the convex hull. The value $\beta=15$ used in the figure is chosen to make this behavior visually apparent. This indicates the utility of \cref{lem_hopf_conv} in our analysis. Sufficient magnitudes of $\beta$ in \cref{theo_hopf_miranda} and their relation to the scaling in transformers \cite{vaswani2017attention}, are briefly discussed in Section~\ref{sec:num_experiments}. 
    
\end{enumerate}
\end{remark}

\section{Mathematical Formulation and Proofs}
\label{sec_proofs}

In this section, we provide the rigorous version of \cref{theo_hopf_miranda} and the proofs. Before doing so, we need to introduce some additional notation.

For sets \( A, B \subset \mathbb{R}^d \), a point \( \bx \in \mathbb{R}^d \), and a scalar \( \mu \in \mathbb{R} \), we use the following standard notation:
\[
A \pm B := \{ \mathbf{x} \pm \mathbf{y} : \mathbf{x} \in A,\, \mathbf{y} \in B \}, \quad 
\dist(\mathbf{x}, A) := \inf_{\mathbf{y} \in A} \Vert \mathbf{x} - \mathbf{y} \Vert, \quad 
\mu A := \{ \mu \mathbf{x} : \mathbf{x} \in A \}.
\]

We also use basic notions from the theory of polytopes in \( \mathbb{R}^d \), i.e., bounded convex sets defined as the intersection of finitely many half-spaces \cite{grunbaum1967convex}. For example, the convex hull \( \conv\{ \boldsymbol{\omega}_1, \ldots, \boldsymbol{\omega}_k \} \) defines a polytope.
Let \( \dim(P) \) denote the dimension of the smallest affine subspace containing the polytope \( P \). For \( 0 \leq k \leq \dim(P) \), let \( \bF^k(P) \) denote the set of \( k \)-faces of \( P \), and let
\[
\partial^k P := \bigcup_{F \in \bF^k(P)} F
\]
be the union of all \( k \)-faces. The elements of \( \bF^{\dim(P) - 1}(P) \) are also called \emph{facets} of \( P \), and for simplicity we denote $\partial P:= \partial^{\dim(P) - 1}P$. Below, the polytopes are usually defined by a set of vectors $\{\bv_j :\, j \in J\}$, where $J \subset \N$ is an index set and each $\bv_j$ defines a unique vertex of $P = \conv\{\bv_j :\, j \in J\}$. For a given face $F$ of $P$, we write $I(F) \subset J$ for the indices defining $F$, that is $F = \conv\{\bv_j :\, j \in I(F)\}$.

For a bounded polytope \( P \subset \mathbb{R}^d \), let \( \mathbf{b}_P \) denote the arithmetic mean of its vertices (i.e., its 0-faces). For \( \eta \in (0,1) \) define the \textit{scaled polytope}
\begin{equation}\label{def_pmu}
    P_\eta \,:=\, \eta(P - \mathbf{b}_P) + \mathbf{b}_P.
\end{equation}

Let \( m := d - \dim(P) \) and let \( \bv_1, \ldots, \bv_m \) be an orthonormal basis for the subspace orthogonal to the affine hull of \( P \). Define the auxiliary polytope
\begin{equation}\label{def:BPperp}
  C(P^\perp) \,:=\, \left\{\sum_{j=1}^m \alpha_j\, \tilde{\bv}_j:\, \sum_{j=1}^m \alpha_j\, \leq\, 1\, \mathrm{and }\, \alpha_j\geq 0,\, \tilde{\bv}_j\in\{\pm\bv_j\}\ \mathrm{for }\, j=1,\ldots,m \right\}.  
\end{equation}

Although the definition of \( C(P^\perp) \) depends on the particular choice of the orthonormal vectors \( \bv_1, \ldots, \bv_m \), it is only important that we have a formal means of defining a blow up in all directions orthogonal to \( P \). The vectors \( \bv_j \) serve to provide concrete mathematical objects in the proofs below. The specific basis chosen does not affect the validity of the results.
Next, for \( \varepsilon > 0 \), define the \textit{thickened polytope}:
\begin{equation}\label{def_ext_to_ndim}
    P^\varepsilon \,:=\, P + \varepsilon C(P^\perp).
\end{equation}

Let \( j := \dim(P)-1 \). Then  the $j-$faces of \( P \), (the facets of $P$) become subsets of \(\partial P^\varepsilon \). We define
\[
\bF_0^{d-1} (P^\varepsilon) := \{ F + \varepsilon C(P^\perp) : F \in \bF^{j}(P) \}
\]
to be the set of \((d-1)\)-dimensional facets of \( P^\varepsilon \) arising from extrusion of the \( j \)-faces of \( P \). The remaining \((d-1)\)-faces of $P^\varepsilon$ are collected in
\[
\bF_1^{d-1} (P^\varepsilon) := \bF^{d-1}(P^\varepsilon) \setminus \bF_0^{d-1} (P^\varepsilon).
\]

\begin{figure}[h!]
\centering
\begin{tikzpicture}[scale=2.0, line join=round]

\coordinate (centroid) at (0,0);

\coordinate (A) at (-1.2,-0.6);
\coordinate (B) at (1.2,-0.6);
\coordinate (C) at (0,1.2);
\node[left] at (A) {\footnotesize$A$};
\node[right] at (B) {\footnotesize$B$};
\node[above] at (C) {\footnotesize$C$};
\def\eta{0.7}

\coordinate (As) at ($(centroid)!\eta!(A)$);
\coordinate (Bs) at ($(centroid)!\eta!(B)$);
\coordinate (Cs) at ($(centroid)!\eta!(C)$);

\def\e{0.25}
\coordinate (shift) at (0.26,0.1);  

\coordinate (As_up) at ($(As)+(shift)$);
\coordinate (Bs_up) at ($(Bs)+(shift)$);
\coordinate (Cs_up) at ($(Cs)+(shift)$);

\coordinate (As_down) at ($(As)-(shift)$);
\coordinate (Bs_down) at ($(Bs)-(shift)$);
\coordinate (Cs_down) at ($(Cs)-(shift)$);

\fill[gray!20] (As_down) -- (Bs_down) -- (Cs_down) -- cycle;
\fill[gray!20] (As_up) -- (Bs_up) -- (Cs_up) -- cycle;
\fill[gray!20] (As_down) -- (As_up) -- (Cs_up) -- (Cs_down) -- cycle;
\fill[gray!20] (Bs_down) -- (Bs_up) -- (Cs_up) -- (Cs_down) -- cycle;

\draw[thick] (A) -- (B) -- (C) -- cycle;

\draw[thick, dashed] (As) -- (Bs) -- (Cs) -- cycle;

\draw[thick] (As_up) -- (Bs_up) -- (Cs_up) -- cycle;
\draw[thick] (As_down) -- (Bs_down) -- (Cs_down) -- cycle;

\draw[dashed] (As_up) -- (As_down);
\draw[dashed] (Bs_up) -- (Bs_down);
\draw[dashed] (Cs_up) -- (Cs_down);
\filldraw[black] (centroid) circle (0.5pt);
\node[below right] at (centroid) {\footnotesize$\mathbf{b}_P$};

\end{tikzpicture}
\caption{
The triangle $A,\, B,\, C$, defines the polytope \(P\), (solid) is centered at its arithmetic mean \(\mathbf{b}_P\). The scaled polytope \(P_\eta\) (dashed) contracts toward the arithmetic mean with factor \(\eta\). The prism \((P_\eta)^\varepsilon = P_\eta + \varepsilon C(P^\perp)\) is shown with light gray faces.
}
\end{figure}


\begin{theorem}\label{theo_hopf_miranda_formal}
Let $f$ be the continuous Hopfield function defined in (\ref{networkFunHopf}), with pairwise distinct, unit-length vectors $\bom_1,\ldots,\bom_n\in \R^d$ as the columns of $\bW$. For given $J\subset[n]$ assume $\{\bom_j:\, j\in J\}$ are 
\CDPS\, and that for $P:=\conv\{\bom_j:\, j\in J\}$ and all facets $F\in \bF^{\dim(P)-1}(P)$, the vectors $\{\bom_j:\, j\in I(F)\}$ are \CDPS. Then there are $1>\eta>0$ and $\beta(\varepsilon)>0$ with $\beta(\varepsilon)\rightarrow \infty$ as $\varepsilon\rightarrow 0$, such that 
\begin{equation*}
    (P_\eta)^{\varepsilon} \cap \, \fix(f)\,\neq \, \emptyset.
\end{equation*}
Further, provided $\vert J\vert >1$, there exists a $\beta_0>0$ such that for $\beta(\varepsilon)>\beta_0$, all fixed points in $(P_\eta)^{\varepsilon}$ are unstable.
\end{theorem}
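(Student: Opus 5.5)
The plan is to apply \cref{prop_miranda} to $f$ restricted to the thickened polytope $Q:=(P_\eta)^\varepsilon$. The facets of $Q$ split into the extruded facets $\bF_0^{d-1}(Q)=\{F_\eta+\varepsilon C(P^\perp)\}$, with outward normals in $U_1:=\mathrm{span}(P-\mathbf b_P)$, and the remaining facets $\bF_1^{d-1}(Q)$, which are of the form $P_\eta+\varepsilon G$ for a facet $G$ of the cross-polytope $C(P^\perp)$ and have normals in $U_2:=U_1^\perp$.

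For the facets in $\bF_1^{d-1}(Q)$ we need condition \eqref{mirandaCondition_01}, namely that $f$ maps them into $Q$'s half-space in the orthogonal directions. This comes from \cref{lem_hopf_conv} with index set $J$. Since $\{\bom_j:j\in J\}$ is \CDPS\ and $P$ is compact, continuity gives $\delta_J:=\min_{\bv\in P}(\taup-\taum)(\bv)>0$. For any $\bx\in Q$ we have $\bx=\bv+\varepsilon'\br$ with $\bv\in P$ and $\varepsilon'\le\varepsilon$. Hence $\dist(f(\bx),P)<2n\exp(\beta(-\delta_J+2\varepsilon))$. We choose $\beta(\varepsilon)$ so that this bound is at most $\varepsilon/\sqrt{m}$, for instance $\beta=\log(2n\sqrt m/\varepsilon)/(\delta_J/2)$ once $\varepsilon<\delta_J/4$; then $\beta(\varepsilon)\to\infty$ as $\varepsilon\to0$. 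This places the orthogonal component of $f(\bx)$ inside $\varepsilon C(P^\perp)$, because the cross-polytope contains the ball of radius $1/\sqrt m$. It gives \eqref{mirandaCondition_01} for the $C_1$-facets, provided the projection of $f(\bx)$ onto $\mathrm{aff}P$ is handled separately. That projection only matters for the second class.

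For the facets in $\bF_0^{d-1}(Q)$ we need condition \eqref{mirandaCondition_02}, i.e. $f$ pushes outward across $F_\eta$ in the $U_1$-direction. Here we use the \CDPS\ property of each facet $F$ of $P$. Let $\delta_F$ be the corresponding minimal gap over $F$. By continuity, for $\eta$ close to $1$ and $\varepsilon$ small, every $\bx\in F_\eta+\varepsilon C(P^\perp)$ has the form $\bu+\varepsilon''\br$ with $\bu\in F$ and $\varepsilon''\le(1-\eta)\mathrm{diam}P+\varepsilon$. We fix $\eta$ so that $(1-\eta)\mathrm{diam}P<\min_F\delta_F/8$; this fixes $\eta$ independently of $\varepsilon$. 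Applying \cref{lem_hopf_conv} with index set $I(F)$ then gives that $f(\bx)$ is exponentially close to $F$. Consequently $\bn_F^T f(\bx)\ge b_F^{P}-O(e^{-\beta\delta_F/2})$, where $b_F^P$ is the offset for $F\subset P$. The offset of $F_\eta$ is strictly smaller, by $(1-\eta)c_F$ with $c_F>0$. So for $\beta$ large, $\bn_F^T f(\bx)\ge b_F$ for the facet of $Q$, and $\bn_F\in U_1$. The class-$C_1$ condition needs only the $U_2$-component, which was controlled above. \cref{prop_miranda} then yields a fixed point in $Q$.

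For instability when $|J|>1$, I expect this to be the main obstacle. The Jacobian is $\Jac(\bx)=\beta\bW(\mathrm{diag}(\bp)-\bp\bp^T)\bW^T$ with $\bp=\sm_\beta(\bW^T\bx)$. At a fixed point $\bx^*=\bW\bp$ in $Q$, the mass of $\bp$ outside $J$ is exponentially small. Moreover, $\bp$ cannot concentrate on a single index, nor on the indices of any proper face. If it did, $\bx^*$ would be near a facet $F$ of $P$, contradicting the outward push established above at $F_\eta$ together with the fact that $\bx^*$ lies in $P_\eta$ (interior in the affine directions). So some pair $i\ne k$ in $J$ has $\bp^{(i)},\bp^{(k)}\ge c(\eta)>0$ uniformly in $\beta$. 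Testing the quadratic form of the symmetric matrix $\Jac$ with $\bu=\bom_i-\bom_k$ gives
\[
\bu^T\Jac\bu=\beta\,\mathrm{Var}_{\bp}(\bW^T\bu)\ge\beta\, c'(\eta)\,\|\bom_i-\bom_k\|^4\,\|\bu\|^{-2}\cdot\|\bu\|^2 .
\]
Since the patterns are distinct, this exceeds $\|\bu\|^2$ once $\beta>\beta_0$, so $\rho(\Jac(\bx^*))>1$. The delicate point is making the lower bound $c(\eta)$ on two weights rigorous and uniform. I would derive it from the fact that $\bx^*=\bW\bp$ lies within $(1-\eta)$-scaled distance of $\mathbf b_P$ in $P$'s affine hull.
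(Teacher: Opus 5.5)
Your proposal is correct and is essentially the paper's proof. You apply \cref{prop_miranda} on $(P_\eta)^\varepsilon$, with the facets in $\bF_0^{d-1}$ pushed outward via \cref{lem_hopf_conv} for the index sets $I(F)$ and the facets in $\bF_1^{d-1}$ pulled inward via \cref{lem_hopf_conv} for $J$. You then run the argument of \cref{lem_uniform_instab}: two weights bounded below uniformly in $\beta$, followed by the Rayleigh quotient along $\bom_i-\bom_k$. The ``delicate point'' you flag is resolved simply because the affine projection of $\bx^*$ lies in $P_\eta$ and hence stays a fixed distance from every vertex $\bom_i$, $i\in J$; the appeal to the outward push is not needed.

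Your direct comparisons are more precise than the paper's distance inequalities \eqref{F_attract} and \eqref{P_attract}. You compare $\bn_F^Tf(\bx)$ with the margin $(1-\eta)c_F$, and the orthogonal component of $f(\bx)$ with the inscribed ball of radius $\varepsilon/\sqrt m$ in $\varepsilon C(P^\perp)$, which verifies \eqref{mirandaCondition_01}--\eqref{mirandaCondition_02} exactly. The only slip is that your names $U_1,U_2$ are swapped relative to the classes $C_1,C_2$ of \cref{prop_miranda}.
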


\cref{theo_hopf_miranda} follows directly from \cref{theo_hopf_miranda_formal}. Indeed, \cref{theo_hopf_miranda}~(1) is an informal version of \cref{theo_hopf_miranda_formal}. For \cref{theo_hopf_miranda}~(2), note that every \( k \)-face \( F \) of \( P \) itself satisfies the conditions of \cref{theo_hopf_miranda_formal} and thus admits a fixed point in \( (F_{\eta_F})^{\varepsilon_F} \) for suitable \( \eta_F > 0 \), \( \varepsilon_F > 0 \). To ensure that each face contributes a distinct fixed point, one observes that \( \varepsilon_F \) can be chosen sufficiently small such that the $(F_{\eta_F})^{\varepsilon_F}$ are mutually disjoint for different faces.

In this way, every subset of pattern vectors whose convex hull satisfies the conditions of \cref{theo_hopf_miranda_formal} gives rise to a fixed point for sufficiently large \( \beta \). In particular, the stable fixed points near each vertex \( \bom_j \), \( j=1,\ldots,n \) (i.e., the 0-faces of \( \conv\{\bom_1,\ldots,\bom_n\} \)), do not coincide with the fixed points associated with higher-dimensional faces guaranteed by \cref{theo_hopf_miranda_formal}. The existence of these attractive fixed points in the vicinity of each \( \bom_j \) is shown in \cite[Lemma A6]{ramsauer2020hopfield} for sufficiently large \( \beta \); actually, the authors in \cite{ramsauer2020hopfield} fix \( \beta = 1 \) but assume that the norm of the vectors \( \bom_j \) are sufficiently large.

\begin{corollary}\label{cor_allCDPS}
Let \( f \) be the continuous Hopfield function defined in (\ref{networkFunHopf}), with pairwise distinct, unit-length vectors \( \bom_1,\ldots,\bom_n \in \R^d \) as the columns of \( \bW \). Assume that for every face $F\in \bF^k(P)$, $0\leq k \leq\dim(P)$, where \( P := \conv\{\bom_1,\ldots,\bom_n\} \), the corresponding set of vectors $\{\bom_j:\ j\in I(F)\}$ is \CDPS. Then for sufficiently large \( \beta > 0 \), there are at least as many fixed points as facets.
\end{corollary}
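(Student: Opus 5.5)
The plan is to apply \cref{theo_hopf_miranda_formal} once to every facet of $P:=\conv\{\bom_1,\ldots,\bom_n\}$ and then check that the fixed points obtained in this way are pairwise distinct.

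Fix a facet $G\in\bF^{\dim(P)-1}(P)$ and set $J:=I(G)$. By hypothesis, $\{\bom_j: j\in J\}$ is \CDPS. Every facet of $G$ is a face of $P$ of dimension $\dim(P)-2$, so the corresponding vector sets $\{\bom_j: j\in I(F)\}$, $F\in\bF^{\dim(G)-1}(G)$, are also \CDPS. One degenerate case needs attention: $G$ may be a single point, when $\dim(P)=1$. Then $G=\{\bom_j\}$ is a vertex, there are no facets of $G$ to check, and the vertex fixed points of \cite[Lemma A6]{ramsauer2020hopfield} can be used instead (or the $0$-dimensional version of the theorem, whose box is a small cross-polytope around $\bom_j$).

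Applying \cref{theo_hopf_miranda_formal} to $J=I(G)$ gives $\eta_G\in(0,1)$ and a function $\beta_G(\varepsilon)$ with $\beta_G(\varepsilon)\to\infty$ as $\varepsilon\to0$. For $\beta$ matched to $\varepsilon$ in this way, the set $(G_{\eta_G})^{\varepsilon}$ contains a fixed point $\bx_G$ of $f$. There are finitely many facets, so we can take $\eta:=\max_G\eta_G$, or simply keep each $\eta_G$ separate. The key issue is that the theorem ties $\varepsilon$ to $\beta$, whereas we need a single large $\beta$ that works for all facets at once. I would read the theorem, as its proof via \cref{lem_hopf_conv} suggests, as follows: for each $\varepsilon$ below some threshold there is $\beta_G(\varepsilon)$ such that every $\beta\ge\beta_G(\varepsilon)$ works. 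The bound $2(n-|J|)\exp(\beta(-\delta+2\varepsilon))$ only improves as $\beta$ grows, so this reading is natural. With it, choose $\varepsilon>0$ small and $\beta\ge\max_G\beta_G(\varepsilon)$.

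It remains to show that $G\neq G'$ implies $\bx_G\neq\bx_{G'}$, and I expect this step to need the most care. It suffices that the sets $(G_{\eta_G})^{\varepsilon}$ and $(G'_{\eta_{G'}})^{\varepsilon}$ are disjoint for small $\varepsilon$. Both $G_{\eta_G}$ and $G'_{\eta_{G'}}$ are compact. Since $\eta_G<1$, $G_{\eta_G}$ lies in the relative interior of $G$; in particular it avoids the relative boundary of $G$, and hence avoids $G\cap G'$, which is a proper face of $G$. Distinct facets meet only in a common lower-dimensional face, so $G_{\eta_G}\cap G'_{\eta_{G'}}=\emptyset$. The two compact sets are therefore at positive distance $d_{G,G'}$. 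The thickening $\varepsilon C(P^\perp)$ is contained in the ball of radius $\varepsilon$, so taking $\varepsilon<\tfrac12\min_{G\neq G'}d_{G,G'}$ makes the thickened sets disjoint.

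Hence, for all sufficiently large $\beta$, $f$ has at least $|\bF^{\dim(P)-1}(P)|$ distinct fixed points. The vertex attractors and the fixed points attached to lower-dimensional faces are then additional, though the corollary does not need them.
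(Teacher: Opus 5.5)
Your proposal is correct and follows the same route as the paper, which sketches it in the paragraph after \cref{theo_hopf_miranda_formal}. There, \cref{theo_hopf_miranda_formal} is applied to each face, whose own facets are CIPS by hypothesis, and the resulting fixed points are made distinct by taking $\varepsilon$ small enough that the sets $(G_{\eta_G})^{\varepsilon}$ are pairwise disjoint; your choice of a single $\beta\ge\max_G\beta_G(\varepsilon)$ matches the paper's remark, at the end of that theorem's proof, that $\eta,\varepsilon$ can be kept fixed as $\beta$ increases.
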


\begin{remark}\label{rem_allCDPS}
Under the condition of \cref{cor_allCDPS}, in higher dimensions there are typically many more unstable fixed points than patterns to be memorized, i.e., the \( \bom_1,\ldots,\bom_n \). The precise number depends on the geometric configuration of the $\bom_1,\ldots,\bom_n$ and the relationship between the number of vertices \( n \) and the total number of faces of \( \conv\{\bom_1,\ldots,\bom_n\} \).
\end{remark}

We next provide proofs and auxiliary results.


\begin{proof}(\cref{lem_hopf_conv})
For the sake of brevity, let $\taup:=\taup(\bv),\, \taum:=\taum(\bv)$ and $k:=\vert J\vert
$. For $j\notin J$ we obtain
\begin{equation}\label{inq_lower_res_w2}
    \exp(\beta\, \bom_j^T \bx)\, \leq  \, \exp(\beta\,( \taum+\varepsilon)).
\end{equation}
W.l.o.g. let $1\in J$ and $\taup=\bom_1^T \bv$, then
\begin{eqnarray}\label{inq_upper_res_w1}
   \exp(\beta\, \bom_1^T \bx) 
   \,\geq\, \exp\left(\beta(\taup-\varepsilon)\right).  
\end{eqnarray}
For $Z:=\sum_{j=1}^n \exp(\beta\, \bom_j^T \bx)$, it follows from (\ref{inq_upper_res_w1}) that
\begin{equation}\label{inq_Z}
  \exp\left(\beta(\taup-\varepsilon)\right)\,\leq\, Z.  
\end{equation}
Now, let $f(\bx)=\bv_1+\bv_2$, where
\begin{equation*}
 \bv_1\,:=\,\sum_{j\in J} \bom_j\, \frac{\exp(\beta\, \bom_j^T \bx )}{Z}, \quad \bv_2\,:=\,\sum_{j\notin J} \bom_j\, \frac{\exp(\beta\, \bom_j^T \bx )}{Z}.
\end{equation*}
Then (\ref{inq_lower_res_w2}) and (\ref{inq_Z}) imply 
\begin{equation}\label{inq_w2_upper}
    \Vert \bv_2 \Vert\, \leq\, \frac{ (n-k) \exp(\beta (\taum+\varepsilon))}{ \exp\left(\beta(\taup-\varepsilon)\right)}= (n-k)\exp(\beta (-\delta+2\varepsilon)).
\end{equation}
From (\ref{inq_lower_res_w2}) and (\ref{inq_Z}), we further obtain that 
\begin{equation}\label{upper_koverZ}
\begin{split}
        q\,:=\,\sum_{j\in J} \frac{\exp(\beta \bom_j^T\bx)}{Z} &=1-\frac{\sum_{j\notin J}\exp(\beta\, \bom_j^T\bx)}{Z}\\
    &\geq 1 - (n-k)\,\exp(\beta( -\delta+2\varepsilon)).
\end{split}
\end{equation}
Since $\bv^*:=1/q\, \bv_1\in \conv\{\bom_j:j\in J\}$, it follows that $\Vert \bv^*-\bv_1\Vert\geq \dist(\bv_1,\conv\{\bom_j:j\in J\})$ and the triangle inequality further gives 
\begin{equation*}
    \Vert \bv^*-\bv_1\Vert\,\leq\, \left(\frac{1}{q}-1\right) \frac{\sum_{j\in J} \exp(\beta \bom_j^T\bx)}{Z} \,=\, (1-q),
\end{equation*}
and thus $1-q\geq  \dist(\bv_1,\conv\{\bom_j:j\in J\})$. Hence, (\ref{upper_koverZ}) implies
\begin{equation} \label{inq_w1_upper}
    \dist(\bv_1,\conv\{\bom_j:j\in J\}) \,\leq \,(n-k)\,\exp(\beta( -\delta+2\varepsilon)).
\end{equation}
Finally, considering that $\dist\left(f(\bx),\conv\{\bom_j:j\in J\}\right)\leq \Vert \bv^*-\bv_1\Vert+\Vert \bv_2\Vert$, (\ref{inq_w2_upper}) and (\ref{inq_w1_upper}) give
\begin{equation*}
    \dist\left(f(\bx),\conv\{\bom_j:j\in J\}\right)\,\leq\, 2\,(n-k)\exp(\beta (-\delta+2\varepsilon)).
\end{equation*}
\end{proof}

\begin{lemma}
\label{lem_uniform_instab}
Let $f$ be the continuous Hopfield function defined in (\ref{networkFunHopf}), with pairwise distinct, unit-length vectors $\bom_1,\ldots,\bom_n\in \R^d$ being the columns of $\bW$. Let $\bx^{*}\in \fix(f)$ such that  $\Vert \bx^{*} - \bom_j\Vert>\delta>0$ for all $j=1,\ldots,n$. Then there exists an $a:=a(\delta)>0$ such that for
\[
    \beta_0
    \;:=\;
    \frac{4}{a(1-\omega)^2},\quad \mathrm{where}\  \max_{j\neq k}\vert \bom_j^T \bom_k\vert =:\omega \, <\, 1,
\]
the following holds. If $\beta >\beta_0$, then $\bx^{*}$ is an unstable fixed point. 

\end{lemma}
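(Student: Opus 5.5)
The plan is to compute the Jacobian of $f$ at $\bx^*$ explicitly and exhibit an eigenvalue larger than one. Write $\bp:=\sm_\beta(\bW^T\bx^*)\in\Delta^{n-1}$, so that $\bx^*=\bW\bp$. The Jacobian of the softmax is $\beta(\mathrm{diag}(\bp)-\bp\bp^T)$, hence
\[
\Jac(\bx^*)=\beta\,\bW\bigl(\mathrm{diag}(\bp)-\bp\bp^T\bigr)\bW^T=\beta\sum_{j=1}^n \bp^{(j)}(\bom_j-\bx^*)(\bom_j-\bx^*)^T ,
\]
which is a symmetric positive semidefinite matrix, namely $\beta$ times the covariance matrix of the patterns under the weights $\bp$. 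Its spectral radius is therefore $\max_{\Vert\bu\Vert=1}\bu^T\Jac(\bx^*)\bu$. It is enough to find a unit vector $\bu$ with $\bu^T\Jac\bu\ge \beta\, a(1-\omega)^2/4$. Then $\beta>\beta_0$ gives $\rho(\Jac(\bx^*))>1$, and $\bx^*$ is unstable in the sense defined earlier.

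The key step is to use $\Vert\bx^*-\bom_j\Vert>\delta$ for all $j$ to show that $\bp$ is not concentrated on a single index. Let $p_{\max}=\bp^{(k)}$ be the largest weight. Then
\[
\Vert\bx^*-\bom_k\Vert=\Bigl\Vert\sum_{j\ne k}\bp^{(j)}(\bom_j-\bom_k)\Bigr\Vert\le 2(1-p_{\max}),
\]
so $1-p_{\max}>\delta/2$. I would then pick a second index $l\neq k$ and aim for $\bp^{(k)}\bp^{(l)}\ge a$ for some $a=a(\delta)$ depending only on $\delta$ (and $n$).

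If this is awkward, I would instead lower-bound the covariance directly. I would use $\mathrm{tr}\,\Jac=\beta\sum_j\bp^{(j)}\Vert\bom_j-\bx^*\Vert^2>\beta\delta^2$, together with $\rho\ge \mathrm{tr}/d$, to get $a$ of order $\delta^2/d$. This may in fact be the simplest route, and it is consistent with $a$ depending only on $\delta$, with the dimension treated as fixed.

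To match the stated form with $(1-\omega)^2$, I would use the pair decomposition
\[
\sum_j \bp^{(j)}(\bom_j-\bx^*)(\bom_j-\bx^*)^T=\tfrac12\sum_{j,m}\bp^{(j)}\bp^{(m)}(\bom_j-\bom_m)(\bom_j-\bom_m)^T .
\]
Take $\bu=(\bom_k-\bom_l)/\Vert\bom_k-\bom_l\Vert$. Since $\Vert\bom_k-\bom_l\Vert^2=2-2\bom_k^T\bom_l\ge 2(1-\omega)$, the $(k,l)$ terms alone contribute at least $\bp^{(k)}\bp^{(l)}\cdot 2(1-\omega)$. I would then bound $\bp^{(k)}\bp^{(l)}$ below in terms of $\delta$, and absorb constants into $a$ so that the bound reads $\beta a(1-\omega)^2/4$. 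An extra factor $(1-\omega)/2\le 1$ can be inserted freely to match the formula.

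The main obstacle is obtaining $\bp^{(k)}\bp^{(l)}\ge a(\delta)$ for a suitable second index $l$. Pigeonhole on the remaining mass $1-p_{\max}>\delta/2$ gives some $l$ with $\bp^{(l)}>\delta/(2(n-1))$, and $p_{\max}\ge 1/n$. Hence $a=\delta/(2n(n-1))$ works, at the price of dependence on $n$; the statement's notation $a(\delta)$ tolerates this, since $n$ is fixed throughout.
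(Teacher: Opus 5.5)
Your proof is correct. Its skeleton matches the paper's: both write $\Jac(\bx^*)=\beta\sum_i \bp^{(i)}(\bom_i-\bmu)(\bom_i-\bmu)^T$ with $\bmu=\bx^*$, and both test the Rayleigh quotient along $\bu=(\bom_k-\bom_l)/\Vert\bom_k-\bom_l\Vert$. The two sub-steps differ, however.

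The paper obtains two indices with weights $\ge a$ by a compactness argument on $\Delta^{n-1}$, so its $a$ is non-explicit and depends on $\bW$. It then needs a sign case analysis to show that one of $(\bom_j-\bmu)^T(\bom_j-\bom_k)$ and $(\bom_k-\bmu)^T(\bom_j-\bom_k)$ has modulus at least $1-\omega$.

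You instead get the explicit value $a=\delta/(2n(n-1))$ from $\Vert\bx^*-\bom_k\Vert\le 2(1-p_{\max})$ together with pigeonhole. Your pairwise identity $\sum_i \bp^{(i)}(\bom_i-\bmu)(\bom_i-\bmu)^T=\tfrac12\sum_{j,m}\bp^{(j)}\bp^{(m)}(\bom_j-\bom_m)(\bom_j-\bom_m)^T$ then gives $\bu^T\Jac(\bx^*)\bu\ge 2\beta(1-\omega)\bp^{(k)}\bp^{(l)}$ directly. This needs no case analysis and depends only linearly on $1-\omega$ rather than quadratically.

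Your trace argument is shorter still. The bound $\rho(\Jac(\bx^*))\ge \mathrm{tr}\,\Jac(\bx^*)/d>\beta\delta^2/d$ needs neither a choice of direction nor $\omega<1$, at the cost of a factor $d$. You could reduce that factor to $\min(d,n-1)$, since the weighted covariance has rank at most $n-1$.

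You also state explicitly a step the paper leaves implicit. Because $\Jac(\bx^*)$ is symmetric positive semidefinite, a Rayleigh quotient above one forces $\rho(\Jac(\bx^*))>1$.
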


\begin{proof}(\cref{lem_uniform_instab})

By assumption, every $\bom_j$ defines a unique vertex of $P:=\conv\{\bom_1,\ldots, \bom_n\}$ and thus $h:\Delta^{n-1}\rightarrow P$, $h(\bp)=\bW\bp$ maps the vertices of $\Delta^{n-1}$ one to one to the $\bom_1,\ldots, \bom_n$. By continuity of $h$ and compactness of $\Delta^{n-1}$, we thus find an $a>0$, such that for all $\bp\in \Delta^{n-1}$ that satisfy $\Vert \bW \bp - \bom_j\Vert >\delta$, we have $\bp^{(j)}\geq a$ for at least two $j\in [n]$. 
 Since $\bx^{*}\in\fix(f)$, there is a unique $\bp^{*}\in\Delta^{n-1}$  with $\bx^{*}=\bW \bp^{*}$ and $\bp^{*}=\sm_\beta(\bW^T\bx^{*})$. The Jacobian of $f$ at $\bx^{*}$ is then given by 
\begin{equation}\label{JacHopf}
    \bJ(\bx^*)\,=\,\beta\bW \bS(\bp^{*})\bW^T,\quad   \text{where}\    \bS(\bp)
    \;:=\;
    \bD(\bp)-\bp\bp^T.
\end{equation}
We use the following second moment decomposition
\[
  \bW \bS(\bp) \bW^T 
  = \sum_{i=1}^n \bp^{(i)}\, (\bom_i - \bmu)(\bom_i - \bmu)^T),
  \quad
  \text{where} \quad
  \bmu := \bW \bp,
\]
cf. \cite[Equation (74-78)]{ramsauer2020hopfield}.
Setting $p^*_k:=(\bp^{*})^{(k)}$, $k=1,\ldots,n$, we have
\[
  \bJ(\bx^*) = \beta\, \sum_{i=1}^n p_i^*\, (\bom_i - \bmu)(\bom_i - \bmu)^T.
\]
For a pair $j\neq k$ with $p^*_j, p^*_k \ge a$, define
\[
  \bu := \frac{\bom_j - \bom_k}{\| \bom_j - \bom_k \|} \in \R^d.
\]
Then,
\begin{equation}\label{qFunstab}
   \bu^T\, \bJ(\bx^*)\, \bu
  = \beta\, \sum_{i=1}^n p^*_i\, [(\bom_i - \bmu)^T \bu]^2
  \;\ge\; \beta\, \Big( 
    p^*_j \left((\bom_j - \bmu)^T \bu\right)^2
    + p^*_k \left((\bom_k - \bmu)^T \bu\right)^2 
  \Big).   
\end{equation}

Considering $\Vert \bom_i\Vert=1$, $i=1,\ldots,n$, we have 
\begin{eqnarray}
    (\bom_j-\bmu)^T (\bom_j-\bom_k)&=1- \bom_j^T\bom_k- \bmu^T(\bom_j-\bom_k)\label{eq1unstab}, \\
      (\bom_k-\bmu)^T (\bom_j-\bom_k)&=-1+ \bom_j^T\bom_k- \bmu^T(\bom_j-\bom_k)\label{eq2unstab}.
\end{eqnarray}
Going through all possible sign configurations for $\bmu^T(\bom_j-\bom_k)$ and $\bom_j^T\bom_k$, it is verified that the absolute value of least one of (\ref{eq1unstab}) and (\ref{eq2unstab}) is greater or equal to $(1-\omega)$. Together with $\Vert \bom_j-\bom_k\Vert \leq 2$ and recalling that $p^*_j, p^*_k \ge a$, the left hand side in (\ref{qFunstab}) can be further estimated to
\begin{equation*}
    \bu^T\, \bJ(\bx^*)\, \bu\, \geq \, \frac{\beta}{4} a (1-\omega)^2.
\end{equation*}
This concludes the proof.

\end{proof}

\begin{proof}(\cref{theo_hopf_miranda_formal})

Let $l:=\dim(P)$. By assumption the vectors corresponding to $P$ and to every $F\in \bF^{l-1} (P)$ are \CDPS\, and this implies that for all index sets $I\in (\{J\} \cup \{I(F):F\in \bF^{l-1} (P)\})=\mathcal{I}$, we have
\begin{equation*}
    \delta_I\,:=\,\min_{\bv\in \conv\{\bom_j:j\in I\}}\left(
\max_{j\in I}\bom_j^T \bv  -\max_{j\in [n]\setminus I}\bom_j^T \bv\right)\, >0.
\end{equation*} 
We define
\begin{equation*}
0\,<\,\delta\,:=\, \min_{I\in \mathcal{I}}\delta_I.
\end{equation*}
For the determination of $\eta$ and $\varepsilon$, first consider some arbitrary $F\in \bF^{l-1}(P)$ and the corresponding
\begin{equation}\label{def_tF}
    \tilde{F}:=\eta\, (F-\bb_P )\, +\bb_P\, +\varepsilon\, C(P^\perp) \, \in \bF_0^{d-1}((P_\eta)^{\varepsilon}).
\end{equation}
Then every $\bx\in \tilde{F}$ decomposes into
\begin{equation*}
    \bx\,=\,\eta\, (\bv- \bb_P)\,+\,\bb_P \,+\, \varepsilon\, \bu
\end{equation*}
such that  $\eta(\bv-\bb_P)$, with $\bv\in F$, is the orthogonal projection of $\bx-\bb_P$ on $P-\bb_P$, and $\varepsilon \bu\,=\bx-\bb_P - \,\eta(\bv-\bb_P)$ with $\bu\in  C(P^\perp )$.
Then the condition $\Vert \bom_j\Vert=1$ for $j=1,\ldots,n$ and the fact that they are pairwise different imply
\begin{equation*}
   \Vert \bv - \bb_P\Vert\, \leq \, 2\quad \mathrm{and}\quad 0<\,q\,:=\,\min\limits_{\bv\in \partial P}\Vert \bv - \bb_P\Vert.
\end{equation*}
The definition of $C(P^\perp)$ in (\ref{def:BPperp}) implies $\Vert \bu \Vert\, \leq\, 1$.
Thus for $\br':=\bx-\bv$, $\mu:=\Vert \br'\Vert$, $\br:=1/\mu \,\br'$, the latter and orthogonality of $\bu$ and $\bv-\bb_p$ give
\begin{equation}\label{ineq_mu}
   q(1-\eta) \,\leq \, \mu \leq\, \sqrt{4(1-\eta)^2\, +\, \varepsilon^2}.
\end{equation}
Note that by construction,
\begin{equation}\label{muDist}
    \mu\,=\,\dist(\bx, F).
\end{equation}
We can now choose $1>\eta>0$ sufficiently close to $1$ and $\varepsilon>0$ sufficiently small to ensure that 
\begin{equation*}
    \sqrt{4(1-\eta)^2\, +\, \varepsilon^2}\,\leq\, \frac{\delta}{3}.
\end{equation*}
Then for $F\in \bF^{l-1}(P)$ and corresponding $\tilde{F}\in \bF_0^{d-1}((P_\eta)^{\varepsilon})$, cf. (\ref{def_tF}), the estimate in (\ref{ineq_mu}) implies that we can write $\bx\in \tilde{F}$ as
\begin{equation}\label{def_eta_eps}
    \bx\,=\,\bv\, +\, \mu\,\br \quad \mathrm{with}\ \bv\in F,\ \Vert\br\Vert=1,\   q(1-\eta)\,\leq \mu\,\leq \,\frac{\delta}{3}.
\end{equation}
This allows to chose $\tilde{\beta}>0$ so large that for $\beta\geq \tilde{\beta}$ and all $F\in \bF^{l-1}(P)$ with corresponding $I:=I(F)$, we have
\begin{equation*}
    2(n-\vert I\vert)\,\exp\left(\beta (-\delta + 2\mu)\right)\,<\,\mu \quad \mathrm{for \, all}\quad q(1-\eta)\,\leq \mu\,\leq \,\frac{\delta}{3}.
\end{equation*}
Now, together with \eqref{def_eta_eps} and \eqref{muDist}, the latter ensures that we can apply \cref{lem_hopf_conv} to conclude, that for all $F\in \bF^{l-1}(P)$ and the corresponding face $\tilde{F}\in \bF_0^{d-1}(P_\eta)^{\varepsilon}$ as in (\ref{def_tF}), we have
\begin{equation}\label{F_attract}
    \dist(f(\bx),F)\,\leq\,  \dist(\bx,F)\quad \mathrm{for\, all}\quad \bx\in \tilde{F}.
\end{equation}
Next let  $F\in\bF_1^{d-1}(P_\eta)^{\varepsilon}$ and $m:=d-\dim(P)$, then by the definition of $C(P^\perp)$ in (\ref{def:BPperp}), we have 
\begin{equation*}
  \frac{\varepsilon}{\sqrt{d}}\, \leq \,\frac{\varepsilon}{\sqrt{m}}  \, \leq \, \dist(\bx,P) \, \leq  \,\varepsilon \quad \mathrm{for\, all}\quad \bx\in F.
\end{equation*}
The lower estimates follow from the fact that the vectors in $\partial C(P^\perp)$ are convex combinations of $m$ ONB vectors.
We can then assume $\delta_J>2\varepsilon$, as we can further decrease $\varepsilon$ if needed. In fact, the lower bound on $\mu$ in \eqref{ineq_mu} does not depend on $\varepsilon$. For such $\varepsilon$, we can then choose $\beta(\varepsilon)>\tilde{\beta}$ sufficiently large, to ensure that 
\begin{equation*}
    2(n-\vert J\vert)\,\exp\left(\beta (-\delta_J+2 \varepsilon')\right)\,<\,\varepsilon'\quad \mathrm{for \, all}\quad \frac{\varepsilon}{\sqrt{d}}\leq \varepsilon'\leq \varepsilon.
\end{equation*}
\cref{lem_hopf_conv} then ensures that for all $F\in \bF_1^{d-1}((P_\eta)^{\varepsilon})$ we have
\begin{equation}\label{P_attract}
    \dist(f(x),P)\,\leq \, \dist(x,P)\quad \mathrm{for\, all}\quad \bx\in F.
\end{equation}
Note that the above $\beta(\varepsilon)$ can be chosen to be increasing as $\varepsilon\rightarrow 0$. 

Now the conditions of \cref{prop_miranda} apply to $(P_\eta)^{\varepsilon}$ and $f:(P_\eta)^{\varepsilon}\rightarrow \R^d$. Indeed, with $C_2:=\bF_0^{d-1}((P_\eta)^{\varepsilon})$ and $C_1:=\bF_1^{d-1}((P_\eta)^{\varepsilon})$, (\ref{F_attract}) yields that condition (\ref{mirandaCondition_02}) holds, and (\ref{P_attract}) yields that (\ref{mirandaCondition_01}) holds.
Thus, \cref{prop_miranda} provides a fixed point of $f$ in $(P_\eta)^{\varepsilon}$. 

Finally, if $\vert J\vert > 1$, we have that the fixed point in $(P_\eta)^\varepsilon$ from above has lower bounded distance to the vertices $\bom_j$ for $j\in J$, determined by $\eta, \, \varepsilon$. As it turns out from the proof, for further increasing $\beta$ we can keep these $\eta, \, \varepsilon$ fixed and still have our fixed point in $(P_\eta)^\varepsilon$.  By the \CDPS\, condition it is clear that we also have lower bounded distance to the $\bom_j$ for $j\in[n]\setminus J$, independent of $\beta$. Thus, the condition of \cref{lem_uniform_instab} holds true so that the fixed points $(P_\eta)^\varepsilon$ are unstable for sufficiently large $\beta>0$ and we hence find a $\beta_0\geq \tilde{\beta}$ so that the latter holds for all $\beta>\beta_0$
\end{proof}

The proof of \cref{prop_miranda} employs key concepts from topological degree theory \cite{milnor1997topology, nagumo1951theory,jezierski2006homotopy}. We briefly illustrate the main concepts needed.
Consider a compact domain $K \subset \R^d$, and let $f: K \rightarrow \R^d$ be a continuously differentiable mapping. For some $\by \notin f(\partial K)$, $\by$ being a regular point, the \textbf{Brouwer degree} is defined as
\begin{equation*}\label{deg_brouwer}
    \deg(f, K, \by) \,=\, \sum_{\bx \in f^{-1}(\by)} \mathrm{sign}\left(\det(\Jac_{f}(\bx))\right),
\end{equation*}
where  $f^{-1}$ denotes the pre-image and $\Jac_f$ is the Jacobian of $f$.
Let $H: K \times [0,1] \rightarrow \R^d$ be continuous, such that $f_t := H(\cdot, t): K \rightarrow \R^d$ is continuously differentiable, and $\by \notin f_t(\partial K)$ for all $t \in [0,1]$. The \textbf{homotopy invariance} then asserts that $\deg(f_0, K, \by) = \deg(f_1, K, \by)$. The notion of degree can be extended to continuous functions, cf.\cite{nagumo1951theory, jezierski2006homotopy}, ensuring that properties such as homotopy invariance are preserved.

\begin{proof}(\cref{prop_miranda})
Without loss of generality we can assume that the subspace $U_1$ and $U_2$ are axis aligned. There hence are disjoint index sets $I_1\cup I_2= [d]$, such the standard unit vectors corresponding to $I_k$ are an orthonormal basis of $U_k$ for $k=1,2$. We can also assume that the origin $0$ is an interior point of $P$. The above assumptions can be realized by an orthogonal change of coordinates and translation that do not affect the assertion. 
For every facet $F\in\bF^{d-1}(P)$, we then have $b_F \,>\, 0$ as it follows from the convexity of $P$ together with the fact that the $\bn_F$ are outward oriented (w.r.t. $P$) and that $0$ is an interior point of $P$.

We observe that for every $F\in C_1$, condition \eqref{mirandaCondition_01} implies
\begin{equation}\label{cond1_PM_proof}
    \bn_F^T\, f(\bx)\, \leq \,\bn_F^T\,\bx \,> 0,\ \text{for all}\, \bx\in F.
\end{equation}
In the same way, for every $F\in C_2$ condition \eqref{mirandaCondition_02} yields
\begin{equation}\label{cond2_PM_proof}
    \bn_F^T\, f(\bx)\,  \geq\,\ \bn_F^T\,\bx \,>\, 0,\ \text{for all}\, \bx\in F.
\end{equation}
We next construct a homotopy where we keep control over the boundary $\partial P$ by means of the before \eqref{cond1_PM_proof},\eqref{cond2_PM_proof}. To this end, let $g(\bx) := f(\bx) - \bx$. If $g$ has a zero on the boundary $\partial P$, the assertion follows and we are done. So we assume $g(\bx)\neq 0$ for all $\bx\in \partial P$ for the following. Define $h:\R^d\to\R^d$ coordinate wise by
\begin{equation}
h(x)^{(j)} :=
\begin{cases}
-x^{(j)} & \text{for}\  j\in I_1,\\
\ \ x^{(j)} & \text{for}\  j\in I_2.
\end{cases}
\end{equation}
For $F\in C_1$ we have $\bn_F^{(j)}=0$ for all $j\in I_2$. This follows from $\bn_F\in U_1$ together with orthogonality and the assumptions that $U_1,\,U_2$ are axis aligned. With the second inequaltity in \eqref{cond1_PM_proof} and by the definition of $h$, we thus have $\bn_F^T \,h(\bx) < 0$ for all $\bx\in F$ and all $F\in C_1$. 
Hence for all $F\in C_1$, \eqref{cond1_PM_proof} gives 
\begin{equation*}
    \bn_F^T\,\left((1-t)\,g(\bx)\,+\,t\, h(\bx)\right)\,\leq\, 0
\end{equation*}
for all $t\in [0,1]$ and all $\bx\in F$, with strict inequality for $t>0$. In the similar way, for all $F\in C_2$, \eqref{cond2_PM_proof} yields 
\begin{equation*}
    \bn_F^T\,\left((1-t)\,g(\bx)\,+\,t\, h(\bx)\right)\,\geq\, 0
\end{equation*}for all $t\in [0,1]$ and all $\bx\in F$, with strict inequality for $t>0$.
We have thus shown that the homotopy $H : P \times [0,1] \rightarrow \mathbb{R}^d$ defined by
\begin{equation*}
    H(\mathbf{\bx}, t) = (1 - t)\,g(\mathbf{\bx}) +\, t \, h(\mathbf{\bx}),
\end{equation*}
does not vanish for all $\bx\in \partial P$ and all $t \in (0,1]$. With the above assumption that $g(\bx)\neq 0$ for $\bx\in \partial P$, we also have $H(\mathbf{\bx}, 0)\neq 0$ for $\bx\in \partial P$. Hence, since $h(0) = 0$ and $\deg(h,K,0)\neq 0$, the homotopy invariance \cite{milnor1997topology,jezierski2006homotopy} yields $\deg(g,K,0)\neq 0$, which guarantees that $g(\bx) = 0$ has a solution $\bx^* \in K$. This is equivalent to $f(\bx^*) = \bx^*$ an thus the assertion follows.
\end{proof}

\section{Numerical Considerations}\label{sec:num_experiments}

In this section, we first numerically illustrate that our central \CDPS,  condition is not vacuous and occurs under simple geometric random models. We generate random polytopes by applying linear transformations with condition numbers
$\kappa\in\{2,4,6\}$ to the standard basis vectors in $\R^n$ (we have $d=n)$ here), for $n\in\{20,50\}$.
For each configuration, we sample $100$ faces with $k\in\{4,7,15\}$ vertices.
For a sampled face and its facets (to meet the assumption in \cref{theo_hopf_miranda}), we estimate the separation margin $\delta$ from \cref{lem_hopf_conv} by Monte Carlo sampling:
we draw $\bv\in\conv\{\bom_j:j\in J\}$ via random convex combinations and record
\[
\delta(v)\;:=\;\max_{j\in J}\bom_j^T v\;-\;\max_{j\notin J}\bom_j^T v.
\]
We take the minimum over $10{,}000$ samples for each face and its facets, respectively,
and then record the minimum over all tested faces as an estimator of the configuration margin.
We count the \CDPS\, condition to be satisfied if this estimate exceeds $10^{-10}$.

\begin{table}[h]
\centering
\small
\setlength{\tabcolsep}{5pt}
\begin{tabular}{ccccccc}
\hline
$n$ & $\kappa$ & $\delta_{\min}$ & $\delta_{\text{median}}$  & \CDPS\, rate  \\
\hline
20 & 2 & $3.34\times 10^{-2}$ & $9.51\times 10^{-2}$  & $100\%$ \\
20 & 4 & $-4.03\times 10^{-2}$ & $2.26\times 10^{-2}$  & $80\%$ \\
20 & 6 & $-4.67\times 10^{-2}$ & $3.72\times 10^{-3}$  & $55\%$ \\
\hline
50 & 2 & $4.35\times 10^{-2}$ & $1.09\times 10^{-1}$ &  $100\%$ \\
50 & 4 & $-6.23\times 10^{-3}$ & $6.71\times 10^{-2}$ &  $99\%$ \\
50 & 6 & $-1.86\times 10^{-3}$ & $5.54\times 10^{-2}$ &  $96\%$ \\
\hline
\end{tabular}
\caption{Numerical evaluation of the \CDPS\, condition on distorted polytopes.
For each configuration we sample $100$ faces for each $k\in\{4,7,15\}$.
Reported values are the minimum and median of the estimated margins
over all tested faces and their facets.}
\label{tab:numerical_summary}
\end{table}
For well-conditioned transformations ($\kappa=2$), the \CDPS\, condition holds for all sampled faces.
The rate decreases with increasing distortion but remains substantial in these dimensions.

To estimate the magnitude of sufficient $\beta$ for concluding the existence of 
fixed points as in \cref{theo_hopf_miranda_formal}, we consider isolated polytopes $P=\conv\{\bom_1,\ldots,\bom_k\}$ by taking $\bW$ to consist of these vectors as columns, so the dynamics of $f$ remains within $P$. 
We perform a grid search over $(\beta,\eta)$ where, for each facet of $P_\eta$ (cf.~\eqref{def_pmu}), we sample $10{,}000$ points and verify whether all are mapped toward the supporting affine space of the corresponding facet 
of the original $P$.
If this holds for all facets, $P_\eta$ satisfies (up to a homeomorphism) the outward mapping condition (\ref{mirandaCondition_02}) required for the Poincar\'e--Miranda argument.
We record the smallest $\beta$ exhibiting this behavior.
Across $k\in\{4,7,15\}$ and $n\in\{20,50,500\}$, such behavior is typically observed for $\beta\in[5,20]$.

The proof of \cref{theo_hopf_miranda_formal} follows the same line of reasoning as the one underlying the above $\beta$-search. More precisely, the Poincar\'e--Miranda conditions (\ref{mirandaCondition_01},\ref{mirandaCondition_02}) in \cref{prop_miranda} for $f$ (up to a homeomorphism) for the points on the facets of $P_\eta^\varepsilon$ (cf.~\eqref{def_eta_eps}) are deduced from \eqref{dist_estimate} from \cref{lem_hopf_conv}. However, enforcing this worst-case sufficient condition, can result in large values of $\beta$, far beyond what is actually needed in concrete cases. Indeed, in contrast to the numerical estimates for $\beta$, on the same geometrical configurations, the sufficient $\beta$ derived by means of \cref{lem_hopf_conv} to obtain the outward mapping condition on some $P_\eta$ as above, range from approximately $12$ to about $400$. This indicates room for improvement towards quantitative rigorous bounds.

For the standard simplex case ($\bW=\bI$), the complete fixed point structure of the softmax functions is known (see \cite{tivno2009bifurcation}). In particular, nontrivial fixed points emerge once
$\beta$ exceeds certain thresholds (see \ref{sec:append}), and for $\beta>n$ the full set of $2^n-1$ fixed points is present
\cite{tivno2009bifurcation}. The numerical observations above suggest that for moderately distorted configurations the values of $\beta$ required for the existence of additional fixed points remain of comparable order.

The standard scaling in the attention mechanism of transformer networks
(\ref{scaledAttention}) uses the factor $1/\sqrt{d_k}$, where $d_k$ is the internal dimension of the key vectors (see Equation~1 in \cite{vaswani2017attention}).
In \cite{widrich2020modern}, this scaling is also used for modern continuous Hopfield networks, or the scaling factor is left as a trainable parameter \cite{ramsauer2020hopfield}. These factors are not directly comparable to our parameter $\beta$, since we assume unit-length patterns $\|\bom_j\|=1$. However, under the heuristic assumption that vectors entering the softmax have stochastically independent entries following approximately a standard Gaussian distribution (cf. page 4 in \cite{vaswani2017attention}), their Euclidean norm grows as $\sqrt{n}$, where $n$ is the sequence length. The numerically determined values for $\beta$ are thus comparable in order of magnitude to the effective scaling in \cite{widrich2020modern} ($n=10{,}000$, cf. Table A3) and \cite{vaswani2017attention} ($n\in\{1024,4096\}$, cf. Table~3). 
Within the scope of this theoretical work, we do not attempt a detailed comparison with practical applications, but we note that the numerically determined $\beta$ values (typically 5-20) are not extreme.

\section{Conclusion}\label{sec:concl}

In this work, we prove the existence of unstable fixed points for modern continuous Hopfield networks 
\[
f(\bx) = \bW\,\sm_\beta(\bW^T \bx),
\quad \text{see~(\ref{networkFunHopf})},
\]
under the \CDPS~condition (\cref{def_CDPS}) and for sufficiently large $\beta > 0$. This finding connects the exact analysis of the fixed points of $\sm_\beta$ from \cite{tivno2009bifurcation} with the research on modern continuous Hopfield networks \cite{ramsauer2020hopfield}. 
Our work leaves several questions open that naturally arise when comparing the basic case of $\sm_\beta$, treated in \cite{tivno2009bifurcation}, with $f$ as in (\ref{networkFunHopf}). For instance, the proof of \cref{theo_hopf_miranda_formal} does not imply that the fixed points located in the sets $(P_\eta)^{\varepsilon}$ are unique, since \cref{prop_miranda} provides only an existence result. We also give no further information about the exact positions of these unstable fixed points.  To give rigorous answers to such questions would require further considerations and may necessitate additional conditions beyond \CDPS. We postpone these questions to future work.
Results in these directions would contribute to a more complete understanding of the dynamics of modern Hopfield networks and clarify how attractive and unstable fixed points determine the shape of orbits and basins of attraction. The examples in Figure~\ref{fig:netdynamics} suggest that orbits may pass near unstable fixed points located close to higher-dimensional faces before converging to the intended stable fixed points near the stored patterns $\bom_1,\ldots,\bom_n$. Our results in \cref{lem_hopf_conv} and 
\cref{theo_hopf_miranda_formal} provide a mechanism explaining such behavior when the \CDPS\, condition is satisfied. The discussion in Section~\ref{sec:num_experiments} indicates that this geometric condition 
holds for a substantial fraction of randomly sampled faces in moderately 
distorted configurations.

\bibliographystyle{plainnat} 
\bibliography{references}

\appendix
\section{The Fixed Points of the Softmax Mapping $\sm_\beta$}\label{sec:append}

We briefly review some results from \cite{tivno2009bifurcation} on the emergence of fixed points of $\sm_\beta$ as $\beta$ increases. 
It should be noted that the results in \cite{tivno2009bifurcation} are formulated in terms of $1/\beta=:T$, which is interpreted as a temperature. We have chosen the results in terms of $\beta$ as in \cite{ramsauer2020hopfield}.

Let $\log$ denote the natural logarithm in the sequel.

It is directly seen that the center $1/n\,\bi$ of $\Delta^{n-1}$ is always a fixed point of $\sm_\beta$, i.e. $\sm_\beta(1/n\,\bi)=1/n\,\bi$. 

We next introduce some notation to formalize line segments in $\Delta^{n-1}$ through its center $1/n\bi$, that connect two faces of dimension $k,l$ with $k+l=n-1$ being on opposite sides w.r.t $1/n \bi$, cf. \cite[Figure 1]{tivno2009bifurcation}.

For $J\subset [n]$, $J\neq [n],\emptyset$ and $k:=\vert J\vert$, we set
\begin{equation}\label{def_K}
\begin{split}
    l_J&:[0,1/k]\rightarrow \Delta^{n-1}\\[0.2cm]
    l_J(x)&:=\bx \ \text{with }\begin{cases}
        \bx^{(j)}=x\quad &\text{if}\, j\in J\\[0.2cm]
        \bx^{(j)}=\frac{1-kx}{n-k}\quad &\text{if}\, j\in [n]\setminus J,\
    \end{cases}\\[0.2cm]
    L_J&\,:=\,l_J([0,1/k]),\\[0.2cm]
    \bx^{(J)}&\,:=\,l_J^{-1}(\bx),\ \mathrm{for}\ \bx\in L_J, \quad (\text{i.e.}\ \bx^{(J)}=\bx^{(j)} \,\mathrm{for}\, j\in J).
\end{split}
\end{equation}

We also set $L_{\emptyset}=L_{[n]}:=\left\{1/n\bi \right\}$.
It follows immediately that 
\begin{equation}\label{L_JtoL_J}
    \sm_\beta(L_J)\,\subset\, L_J \quad\mathrm{for\, all}\ \, \beta>0.
\end{equation}
The following observation, cf. \cite[Theorem 2.1]{tivno2009bifurcation}, constitutes the starting point of the fixed point analysis of $\sm_\beta$:
\begin{equation*}
        \fix(\sm_\beta)\,\subset\, \bigcup_{J\subset [n]} L_{J}.
\end{equation*}   

Let us briefly sketch how the analysis of fixed points evolve from there. Let $k:=\vert J\vert$. 
With the above notation and for some $\bx\in L_J$ and with $x=\bx^{(J)}$ we have:
\begin{equation}\label{3fp_eq0}
    Z\,:=\,\sum_{j=1}^n \exp(\beta \bx^{(j)})\,=\,k\, \exp(\beta\,  x)+ (n-k)\, \exp\left(\beta\, \frac{1-kx}{n-k}\right).
\end{equation}
Then $\sm_\beta(\bx)=\bx$ if, and only the following holds:
\begin{equation}\label{3fp_eq1}
    \frac{\exp(\beta\, x)}{Z}\,=\, x.
\end{equation}
Indeed, given that (\ref{3fp_eq1}) holds, the fixed point identity holds for the components $j\in J$ of $\bx$. For the remaining indices $j\notin J$, we obtain with (\ref{3fp_eq1}) and (\ref{3fp_eq0})
\begin{eqnarray*}
    \frac{1-kx}{n-k} &\,=\,&(n-k)^{-1}\,  \left(1-\frac{k \,\exp(\beta\, x)}{k\, \exp(\beta\,  x)+ (n-k)\, \exp\left(\beta\, \frac{1-kx}{n-k}\right)}\right)\nonumber\\
    &\,=\,& (n-k)^{-1}\,  \frac{(n-k) \,\exp\left(\beta\, \frac{1-kx}{n-k}\right)}{k\, \exp(\beta\,  x)+ (n-k)\, \exp\left(\beta\, \frac{1-kx}{n-k}\right)}\\
    &\,=\,&\frac{\exp\left(\beta\, \frac{1-kx}{n-k}\right)}{Z}\nonumber,
\end{eqnarray*}
showing that the fixed point identity also holds for the components $j\notin J$ in $\bx$.
Next, \eqref{3fp_eq1} is equivalent to 
\begin{equation*}
    k\, \exp(\beta\, x) \,+\, (n-k)\,\exp\left(\beta\, \frac{1-kx}{n-k}\right)\,=\, \frac{\exp(\beta\, x)}{x}.
\end{equation*}
Dividing by $\exp(\beta\, x)$ and isolating the right summand on the left-hand side and taking logarithms gives
\begin{equation}\label{3fp_eq2}
    \log(n-k) + \beta\, \frac{1-nx}{n-k} = \log \left(\frac{1}{x}-k\right).
\end{equation}
To analyse (\ref{3fp_eq2}), let $g(x):= \log\left(\frac{1}{x} - k\right)$. Then 
$ g''(x) = (1 - 2kx)(x - kx^2)^{-2}$, from which it is seen that $g$ is strictly convex for $(0,\frac{1}{2k}] $ and $g$ is strictly concave on $ [\frac{1}{2k},\frac{1}{k}]$. As the left-hand side in (\ref{3fp_eq2}) is an affine function in $x$, the latter implies that (\ref{3fp_eq2}) has at most three solutions in $[0,1/k]$. To see that these three fixed points can be attained,  we assume $x\neq 1/n$ and isolate $\beta$ in \eqref{3fp_eq2}
 \begin{equation}\label{def_hk_01}
     \beta=\log\left(\frac{1/x-k}{n-k}\right) \, \frac{n-k}{1-nx}=:h(x)
 \end{equation}
 The previous assumption is justified by the fact that we seek fixed points other than $1/n\, \bi$. Let us recall that the latter is a fixed point for all $\beta$ and corresponds to $x=1/n$. By means of the Taylor expansion of the logarithm, we have
 \begin{equation}\label{tay_h}
     h(x)\,=\,\frac{1}{x}+\frac{1}{x}\, \sum_{j=1}^\infty\frac{(-1)^j}{(j+1)}  \left(\frac{1/x - n}{n-k } \right)^j
 \end{equation}
 in a neighborhood of $1/n$. It follows that $h$ is a positive analytic function on $(0,1/k)$ and the solution of \eqref{def_hk_01} determine the fixed points on $L_J$ other than $1/n\, \bi$. Considering again (\ref{def_hk_01}), it is seen that $h$ tends to $+\infty$ as $x$ approaches $0$ from the right and as $x$ approaches $1/k$ from the left. We can thus conclude that (\ref{def_hk_01}) has no solution for $0<\beta<\min\{h(x):x\in(0,1/k)\}$, and hence $1/n\,\bi$ is the only fixed point on $L_J$ for this case. On the other hand, the latter analysis on $h$ reveals that for sufficiently large $\beta$, we have two solutions for (\ref{def_hk}).

This then motivates to define, for some integer $n>3$, and $k\in\{1,\ldots,\lfloor n/2\rfloor\}$, we define
let \begin{equation}\label{def_hk}
    \begin{split}
        h_{n,k}(x)&\,:=\,\log\left(\frac{1/x-k}{n-k}\right) \, \frac{n-k}{1-nx},\quad x\in (0,1/k)\\
        m(n,k)&\,:=\,\min\{h_{n,k}(x):x\in (0,1/k)\}\\
        m(n,0)&\,:=\,0,\quad m(n,\lfloor n/2\rfloor+1):=\infty
    \end{split}
\end{equation}
As mentioned before, our notion deviates for \cite{tivno2009bifurcation} in that we formulated the results in terms of $\beta$ instead of $1/\beta$.

The following results now summarizes some results from \cite[Section 2]{tivno2009bifurcation} .
\begin{theorem}(Ti{\v{n}}o)\label{th_fix_scaledSoftmax_detailed}    
    Let $n$ be  some positive integer and $\beta>0$, such that 
    \begin{equation*}
        m(n,\nu)\,<\, \beta <\, m(n,\nu+1)
    \end{equation*}
    for some $\nu\in \{0,\ldots,\lfloor n/2\rfloor\}$. Then for positive integers $k\leq\nu$, there are exactly two $x(k)>y(k)$ in $(0,1/k)\setminus\{1/n\}$ that solve 
    \begin{equation*}
        \beta \,=\, h_{n,k}(x)
    \end{equation*}
    and that determined the fixed points of $\sm_\beta$:
    \begin{equation*}
        \fix(\sm_\beta) \, =\,\left\{\frac{1}{n}\bi\right\}\cup \ \bigcup_{k=1}^{\nu}\bigcup_{\substack{J\subset [n]\\ \vert J\vert =k}}\{l_J(x(k)),\, l_J(y(k))\}.
    \end{equation*}
\end{theorem}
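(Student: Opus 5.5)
The statement we need to prove is Tiňo's theorem: for $m(n,\nu)<\beta<m(n,\nu+1)$, the fixed points of $\sm_\beta$ are exactly $\frac1n\bi$ together with two points $l_J(x(k)),l_J(y(k))$ on each segment $L_J$ with $1\le |J|=k\le\nu$. The plan is to reduce everything to a one-dimensional study of $h_{n,k}$ via the reduction already sketched above.

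\textbf{Reduction to the segments.} First show $\fix(\sm_\beta)\subset\bigcup_J L_J$. At a fixed point, each component satisfies $\exp(\beta x^{(j)})=Z\,x^{(j)}$, with all $x^{(j)}>0$. By strict convexity of the exponential, the line $t\mapsto Zt$ meets $t\mapsto e^{\beta t}$ in at most two points. Hence the components take at most two distinct values, which places $\bx$ on some $L_J$.

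Next, the computation following \eqref{3fp_eq1} shows that $l_J(x)$ is a fixed point iff \eqref{3fp_eq1} holds. For $x\neq 1/n$ this is equivalent to $\beta=h_{n,|J|}(x)$ with $x\in(0,1/k)$. By the symmetry $L_J=L_{[n]\setminus J}$ (reparametrised), it suffices to take $k\le\lfloor n/2\rfloor$. The remaining work is therefore to count the solutions of $\beta=h_{n,k}(x)$.

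\textbf{Shape of $h_{n,k}$.} We know that $h_{n,k}$ is positive and analytic on $(0,1/k)$, since the singularity at $1/n$ is removable by \eqref{tay_h}. We also know $h_{n,k}\to\infty$ at both endpoints.

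To obtain exactly two solutions above the minimum, show that $h_{n,k}$ is unimodal, i.e.\ it has a unique critical point, which is its minimum $m(n,k)$. I would argue this by contradiction from the three-solution bound. Equation \eqref{3fp_eq2} has an affine left side and a right side $g$ that is convex on $(0,\frac1{2k}]$ and concave on $[\frac1{2k},\frac1k)$, so it has at most three roots including $x=1/n$. If $h_{n,k}$ had two local minima, then some level $\beta$ would be hit at least four times by $h_{n,k}$. Away from the root $1/n$ this gives at least four roots of \eqref{3fp_eq2} other than $1/n$, a contradiction. The degenerate case in which $1/n$ itself is a level point still needs a short check.

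Granting unimodality:
\begin{itemize}
\item for $\beta>m(n,k)$ there are exactly two solutions $x(k)>y(k)$;
\item for $\beta<m(n,k)$ there are none.
\end{itemize}

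\textbf{Monotonicity in $k$.} Finally, show $m(n,k)$ is increasing in $k$ on $\{1,\dots,\lfloor n/2\rfloor\}$. Then $m(n,\nu)<\beta<m(n,\nu+1)$ means precisely that the segments with $k\le\nu$ carry two fixed points each and those with $k>\nu$ carry none. This gives the stated set.

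\textbf{Main obstacle.} I expect the monotonicity of $m(n,k)$ in $k$ to be the hard step. My first attempt would be to compare $h_{n,k}$ and $h_{n,k+1}$ pointwise after a suitable reparametrisation, for instance in the variable $u=(1/x-n)/(n-k)$ appearing in \eqref{tay_h}, and show $h_{n,k+1}\ge h_{n,k}$ at the respective minimisers. Failing that, I would study $\partial_k$ of $\min_x h_{n,k}$ treating $k$ as a continuous parameter and use the envelope theorem, reducing the problem to a sign of $\partial_k h_{n,k}$ at the critical point.
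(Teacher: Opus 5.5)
Your reduction to the segments $L_J$ and your unimodality argument are correct, and they follow the sketch the paper gives before the theorem: reduction, the at-most-three-roots bound for \eqref{3fp_eq2}, and blow-up of $h$ at both ends. The paper then simply defers to Ti\v{n}o. Your four-crossings argument, with the level chosen in an open interval avoiding $n=h_{n,k}(1/n)$, fills in what the sketch leaves implicit: strict monotonicity of $h_{n,k}$ on either side of its minimiser, which is all you need for ``two roots above $m(n,k)$, none below''.

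The genuine gap is the step you flag yourself. The ordering $m(n,1)<m(n,2)<\dots<m(n,\lfloor n/2\rfloor)$ is only announced. Without it, the hypothesis $m(n,\nu)<\beta<m(n,\nu+1)$ says nothing about segments with $k<\nu$ or $k>\nu+1$, so the formula for $\fix(\sm_\beta)$ does not follow. The paper covers this point only through the citation.

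Your suggested variable closes the gap quickly. Put $u=(1/x-n)/(n-k)$; this is a decreasing bijection $(0,1/k)\to(-1,\infty)$ sending $1/n$ to $0$. Then $(1/x-k)/(n-k)=1+u$ and $(n-k)/(1-nx)=(n+(n-k)u)/u$, so
\[
h_{n,k}(x)=H_k(u):=\frac{n\log(1+u)}{u}+(n-k)\log(1+u),\qquad H_{k+1}(u)-H_k(u)=-\log(1+u).
\]
The difference is positive exactly for $u\in(-1,0)$, i.e.\ for $x>1/n$.

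Now use the involution $u\mapsto u'=-u/(1+u)$ of $(-1,\infty)$. It gives $H_k(u')-H_k(u)=(2k-n)\log(1+u)$, which is negative for $u>0$ when $2k<n$. Also $H_k'(0)=n/2-k$. Hence for $k<n/2$ every minimiser of $H_k$ lies in $(-1,0)$, while $H_{n/2}$ is minimised at $0$ with value $n$.

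So if $u^*\in(-1,0]$ minimises $H_{k+1}$, then $m(n,k+1)=H_{k+1}(u^*)\ge H_k(u^*)\ge m(n,k)$. The first inequality is strict when $u^*<0$, and the second is strict when $u^*=0$. Only minima are compared, so it does not matter that the $u$-coordinates for $k$ and $k+1$ are different functions of $x$.

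One further point your count glosses over. Since $h_{n,k}(1/n)=n$ for every $k$, and $m(n,\lfloor n/2\rfloor)<n$ for odd $n$, the value $\beta=n$ lies in the range $\nu=\lfloor n/2\rfloor$. There one of your two roots is $x=1/n$ itself, so only one root lies in $(0,1/k)\setminus\{1/n\}$, contrary to the literal statement. The set identity survives because $l_J(1/n)=\frac1n\bi$. You should either count roots in $(0,1/k)$ for the extended $h_{n,k}$ or exclude $\beta=n$.
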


\begin{figure}[htbp]
    \centering
    \begin{subfigure}{0.33\textwidth}
        \centering
        \includegraphics[width=\textwidth]{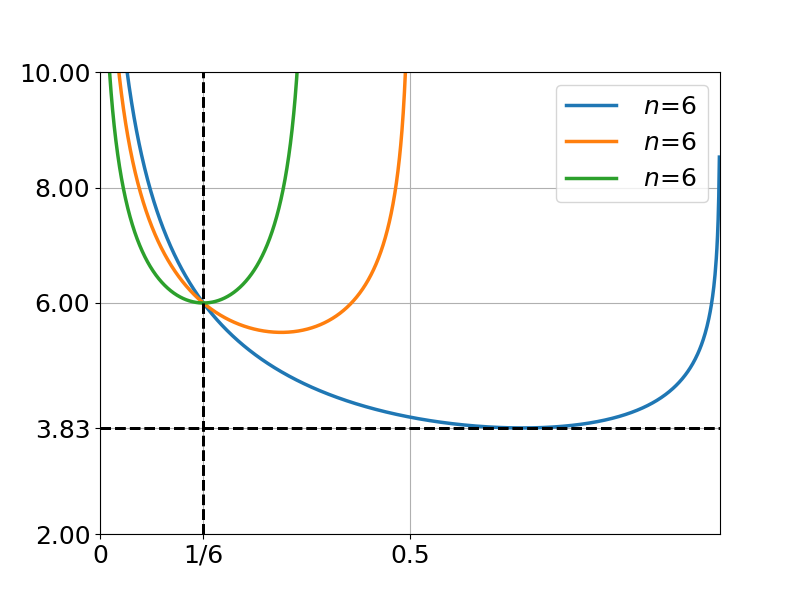}
        \caption{For $n=6$, $k=1,2,3$}
        \label{fig:hk_01}
    \end{subfigure}
    \hfill
    \begin{subfigure}{0.33\textwidth}
        \centering
        \includegraphics[width=\textwidth]{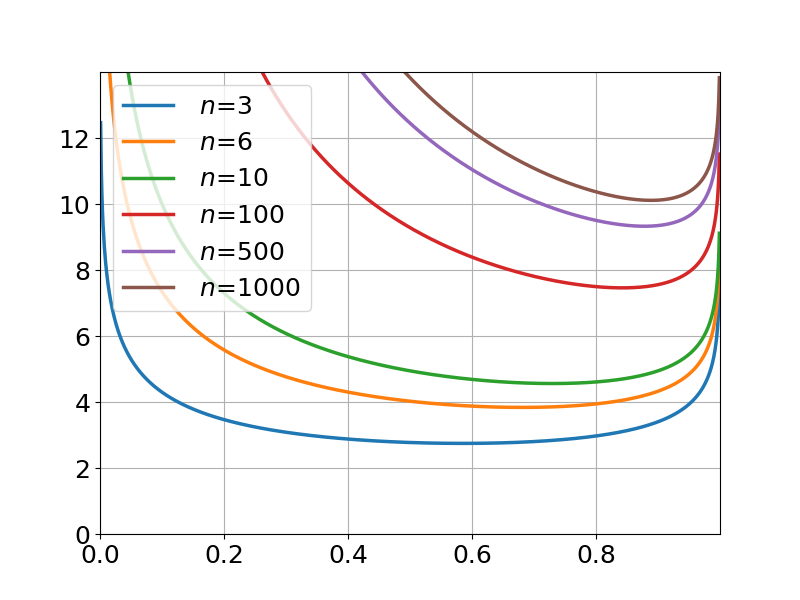}
        \caption{For $k=1$ and different $n$}
        \label{fig:hk_02}
    \end{subfigure}
    \begin{subfigure}{0.30\textwidth}
        \centering
        \begin{tabular}{|c|c|}
            \hline
            $n$ & $m(n,1)$ \\
            \hline
            3 & 2.746  \\
            6 & 3.836 \\
            10 & 4.559  \\
            100 & 7.459  \\
            500 & 9.326  \\
            1000 & 10.111\\
            \hline
        \end{tabular}
        \caption{Min. of $h_{n,1}$ over $(0,1)$}
        \label{tab:min_h_n}
    \end{subfigure}
    \caption{Graphs and minima of $h_{n,k}$ (cf. (\ref{def_hk}))}
    \label{fig:hk_all}
\end{figure}

A stability analysis on the fixed points of $\sm_\beta$ is given in \cite[Section 3]{tivno2009bifurcation}, wherein the eigenvalues and eigenvectors of Jacobians of fixed points are analysed in detail. 
Some findings of this section can be summarized as follwows in our terminology.

\begin{theorem}(Ti{\v{n}}o)\label{th_stablefix_scaledSoftmax_detailed}
For $\beta \neq n$, we have
        \begin{equation*}
            \afix(\sm_\beta) \, =\begin{cases}
                \,\left\{\frac{1}{n}\bi\right\} \quad &\text{for}\ \beta<m(n,1)\\[0.2cm]
                \left\{\frac{1}{n}\bi\right\}\cup \,\bigcup\limits_{j=1}^n\{l_{\{j\}}(x(k))\} \quad &\text{for}\ m(n,1)<\beta<n\\[0.2cm]
                \,\bigcup\limits_{j=1}^n\{l_{\{j\}}(x(k))\} \quad &\text{for}\ n<\beta.
            \end{cases}
    \end{equation*}

\end{theorem}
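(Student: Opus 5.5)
The plan is a direct linearisation at each fixed point listed in \cref{th_fix_scaledSoftmax_detailed}. At $\bp=\sm_\beta(\bx)$ the Jacobian of $\sm_\beta$ is $\beta\bS(\bp)=\beta(\bD(\bp)-\bp\bp^T)$, as in \eqref{JacHopf} with $\bW=\bI$. It always has $\bi$ in its kernel, since the image of $\sm_\beta$ lies in the affine hyperplane $\sum_j x^{(j)}=1$. So stability is decided by the remaining $n-1$ eigenvalues, and I compute them explicitly.

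\textbf{Step 1 (centre).} At $\bp=\frac1n\bi$ we get $\bS=\frac1n(\bI-\frac1n\bi\bi^T)$. All nonzero eigenvalues therefore equal $\beta/n$, so $\frac1n\bi$ is asymptotically stable for $\beta<n$ and unstable for $\beta>n$.

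\textbf{Step 2 (eigenvalues on a line $L_J$).} Take a fixed point $\bp=l_J(x)$ with $k=|J|$ and $y:=\frac{1-kx}{n-k}$. The eigenvectors of $\beta\bS(\bp)$ split into three families.
\begin{itemize}
\item Vectors supported on $J$ with zero sum are annihilated by $\bp^T$. They give the eigenvalue $\beta x$ with multiplicity $k-1$.
\item Vectors supported on $[n]\setminus J$ with zero sum give $\beta y$ with multiplicity $n-k-1$.
\item The direction of $L_J$ itself is $\bv$ with entries $a$ on $J$ and $b=-ka/(n-k)$ off $J$. Here $\bp^T\bv=ka(x-y)$, and a short computation gives the eigenvalue
\[
\beta x\,(1-k(x-y))=\beta x\,\big((n-k)y+ky\big)=\beta n x y .
\]
\end{itemize}
So the spectrum is $\{0,\beta x,\beta y,\beta nxy\}$, with the multiplicities above.

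\textbf{Step 3 (instability when $2\le k\le n-2$).} Both $\beta x$ and $\beta y$ then occur. The fixed-point identity \eqref{3fp_eq1}, applied to both classes, gives $x/y=\exp(\beta(x-y))$, i.e. $\log x-\log y=\beta(x-y)$. Since $x\neq y$ off the centre, the mean value theorem gives $\beta=1/\xi$ for some $\xi$ strictly between $x$ and $y$. Hence $\max(\beta x,\beta y)>1$, and all these fixed points are unstable.

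The case $k=n-1$ is the same as $k=1$ with the roles of $J$ and its complement exchanged. So only the lines towards the vertices, $J=\{j\}$, remain. There the spectrum is $\beta y$ (multiplicity $n-2$) together with $\beta nxy$.

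\textbf{Step 4 (the lines $J=\{j\}$).} For the root with $x>y$, i.e. $x>1/n$, the same mean value argument gives $\beta y<1$. The remaining eigenvalue $\beta nxy$ is exactly the derivative of the one-dimensional restriction $\sm_\beta|_{L_J}$, which is invariant by \eqref{L_JtoL_J}, in the coordinate $x$.

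\emph{This is the main obstacle:} deciding the sign of $1-\beta nxy$ at the two roots $x(1)>y(1)$ of $\beta=h_{n,1}(x)$. I would do it by differentiating \eqref{3fp_eq2} implicitly. The 1D map crosses the diagonal from above exactly where the affine left-hand side crosses $g(x)=\log(1/x-1)$ in the appropriate direction. Translated into $h_{n,1}$, this says that $\beta nxy<1$ holds exactly on the branch where $h_{n,1}$ is increasing, i.e. to the right of its minimiser. That branch carries $x(1)$, which gives stability of $l_{\{j\}}(x(1))$ and instability of $l_{\{j\}}(y(1))$.

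One must also check that the right branch lies in $x>1/n$, so that $\beta y<1$ applies. This follows from the expansion \eqref{tay_h}, which gives $h_{n,1}(1/n)=n$. For $m(n,1)<\beta<n$ the root $y(1)$ lies on the left branch and $x(1)>1/n$. For $\beta>n$ both roots lie on the appropriate sides of $1/n$, and $x(1)$ stays on the increasing branch.

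\textbf{Step 5 (assembly).} For $\beta<m(n,1)$, \cref{th_fix_scaledSoftmax_detailed} shows that $\frac1n\bi$ is the only fixed point. Combining Steps 1–4 then yields the three cases of the statement, with the single threshold $\beta=n$ coming from Step 1.
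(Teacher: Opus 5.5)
Your overall route, diagonalising $\beta\bS(\bp)$ explicitly at every fixed point, is the one the paper relies on: it gives no argument of its own and defers to the Jacobian analysis in Ti\v{n}o's Section~3. Steps~1--3 are correct. The gap is the sign criterion in Step~4. Write $\phi_\beta(x)=\bigl(1+(n-1)e^{\beta(1-nx)/(n-1)}\bigr)^{-1}$ for the restriction of $\sm_\beta$ to $L_{\{j\}}$, and differentiate the identity $\phi_{h_{n,1}(x)}(x)=x$ in $x$. With $y=\frac{1-x}{n-1}$ this gives
\[
\beta n x y-1\;=\;x\,y\,(1-nx)\,h_{n,1}'(x).
\]
So $\beta nxy<1$ holds exactly where $h_{n,1}'(x)$ and $x-\frac1n$ have the same sign, not simply on the increasing branch. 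Your translation drops the factor $1-nx$, which changes sign at the centre.

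For $m(n,1)<\beta<n$ both roots lie to the right of $\frac1n$, so your conclusion is right there. For $\beta>n$, however, the smaller root satisfies $y(1)<\frac1n$ and lies on the decreasing branch, where $\beta nxy<1$. Thus $l_{\{j\}}(y(1))$ is attracting along $L_{\{j\}}$; for instance, $n=3$, $\beta=10$ gives $\beta nxy\approx 0.05$. The reason you give for its instability therefore fails in the third case of the statement.

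The conclusion survives, and the repair uses your own Step~3. At $l_{\{j\}}(y(1))$ with $y(1)<\frac1n$, the off-$J$ coordinate $\frac{1-y(1)}{n-1}$ exceeds $y(1)$. The mean value argument then gives $\beta\,\frac{1-y(1)}{n-1}>1$. This is an eigenvalue of multiplicity $n-2\ge1$, so the instability is transverse to $L_{\{j\}}$, not along it.

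Two smaller points. First, $h_{n,1}(1/n)=n$ alone does not place the minimiser of $h_{n,1}$ to the right of $\frac1n$, which you need (e.g.\ to get $x(1)>\frac1n$ when $\beta<n$). Differentiating \eqref{tay_h} gives $h_{n,1}'(1/n)=\frac{n^2(2-n)}{2(n-1)}<0$, which settles it. Second, for strict inequalities at the roots you need $h_{n,1}'\neq0$ away from the minimiser. This follows from the convex--concave shape of $g$ by counting intersections with multiplicity in \eqref{3fp_eq2}, bearing in mind that every line of the family passes through $(\frac1n,g(\frac1n))$.
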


\end{document}